\newdimen\myMargin
\newcommand{\Aalg}{{\cal A}}
\newcommand{\A}{{\cal A}}
\def\B{{\cal B}}
\newcommand{\K}{\mathbb K}
\newcommand{\LL}{\mathbb L}
\newcommand{\R}{\mathbb R}
\newcommand{\Q}{\mathbb Q}
\newcommand{\C}{\mathbb C}
\newcommand{\Z}{\mathbb Z}
\newcommand{\ve}[1]{\mathbf{#1}}
\newcommand{\rank}{\mathrm{rank}}
\newtheorem{theorem}{Theorem}
\newtheorem{lemma}[theorem]{Lemma}
\newtheorem{corollary}[theorem]{Corollary}
\theoremstyle{remark}
\theoremstyle{remark}\newtheorem{remark}[theorem]{Remark}
\title{Splitting full matrix algebras over algebraic number fields}
\author{\normalsize
  \begin{minipage}{0.3\linewidth}
    \large
    G\'abor Ivanyos \\
    \footnotesize
    Computer and Automation Research 
Institute, Hungarian Acad. Sci; \\
    \texttt{Gabor.Ivanyos@sztaki.hu} \\
    \normalsize
  \end{minipage}
  \qquad
  \begin{minipage}{0.3\linewidth}
    \large
    Lajos R\'onyai \\
    \footnotesize
    Computer and Automation Research 
     Institute, Hungarian Acad. Sci. \\
    Dept. of Algebra, Budapest 
    Univ. of Technology and Economics \\
    \texttt{lajos@ilab.sztaki.hu}
    \normalsize
  \end{minipage}
 \qquad
  \begin{minipage}{0.3\linewidth}
    \large
    Josef Schicho \\
    \footnotesize   
    Johann Radon Institute of Computational and Applied Mathematics;\\
Austrian Academy of Sciences\\
 \texttt{josef.schicho@oeaw.ac.at}
    \normalsize
  \end{minipage}
  \vspace{0.5cm}
}
\begin{document}

\thispagestyle{empty}

\maketitle

\footnotetext{
\noindent
{\em Key words and phrases:} Central simple algebra, splitting, splitting
element, Minkowski's theorem on convex bodies, maximal order, real and
complex embedding, lattice basis reduction, parametrization, Severi-Brauer
surfaces, $n$-descent on elliptic curves. \\
The work of Ivanyos and R\'onyai was supported in part by OTKA grants 
NK 72845, K77476, and K77778. 
}

%==========================================================================

\begin{abstract}
Let $\K$ be an algebraic number field of degree $d$ and discriminant
$\Delta$
over $\Q$. Let 
$\A$ be an associative algebra over $\K$ given by structure constants such
that $\A\cong M_n(\K)$ holds for some positive integer $n$. Suppose that
$d$, $n$ and $|\Delta|$ are bounded. Then an isomorphism
$\A \rightarrow M_n(\K)$ 
can be constructed by a polynomial time ff-algorithm. An ff-algorithm is a
deterministic procedure which is allowed to call oracles for factoring
integers and factoring univariate polynomials over finite fields. 

As a consequence, we obtain a polynomial 
time  ff-algorithm to compute ismorphisms of central simple algebras of
bounded degree over
$\K$.   
\end{abstract}

%%%%%%%%%%%%%%%%%%%%%%%%%%%%%%%%%%%%%%%%%%%%%%%%%%%%%%%%

\section{Introduction}

In this paper we consider the following algorithmic problem, which we call 
{\em explicit isomorphism problem}: 
{\em let $\K$ be an algebraic number field, $\A$ an associative algebra 
over $\K$. Suppose that
$\A$ is isomorphic to the full matrix algebra $M_n(\K)$.
Construct explicitly an isomorphism $\A\rightarrow M_n(\K)$. Or,
equivalently, give an irreducible $\A$ module.}

\medskip
Recall that for an algebra ${\A}$ over a field $\K$ 
and a $\K$-basis   $a_1, \ldots ,a_m$ of ${\A}$ over $\K$ 
the products $a_ia_j$ can be expressed as
linear combinations of the $a_i$
$$ a_ia_j=\gamma _{ij1}a_1+\gamma _{ij2}a_2+\cdots +\gamma _{ijm}a_m. $$
The elements $\gamma _{ijk}\in \K$ are called structure constants. In
this paper an algebra is considered to be given as a collection of  
structure constants. The usual representation of a number field $\K$ over 
$\Q$ with the minimal polynomial $f\in \Z[x]$ of an algebraic integer
$\alpha\in \K$ with $\K=\Q(\alpha)$ can also be considered this way.

\medskip

For basic definitions and facts
from the theory of finite dimensional associative
algebras the reader is referred to \cite{Pi} and \cite{Re}.
Let $\A$ be a finite dimensional associative algebra over $\K$, which 
is either a finite field or an algebraic number field. In
\cite{FR} and \cite{Ro2}  polynomial time algorithms were proposed
for the computation of the radical ${\rm Rad}({\A})$, and for the computation 
of the Wedderburn
decomposition (the minimal two-sided ideals) of the semisimple part
${\cal A}/{\rm Rad}({\cal A})$. The algorithm for the Wedderburn
decomposition is probabilistic (Las Vegas) in the finite case, the 
others are deterministic.
Alternative methods, improvements and related results 
have been obtained in \cite{Eb1}, \cite{Eb2}, \cite{CIW}, \cite{EG1}, \cite{EG2}, 
\cite{EG3}, \cite{GI}, \cite{Iv1}, \cite{Iv2},\cite{Ac1}. A recent survey is
\cite{Bre1}.

To obtain a decomposition of $\A$ into minimal left ideals, one has to be
able to solve 
the explicit isomorphism problem for simple algebras over $\K$. In
\cite{Ro2} this was shown to be possible in randomized polynomial time when 
$\K$ is finite. This method was derandomized recently in \cite{IKRS} in the case 
when the dimension of $\A$ over $\K$ is bounded. 
In \cite{Ro1} and \cite{Vo1} evidence (randomized reduction) 
is presented, that over algebraic 
number fields the explicit isomorphism problem problem is at least as
difficult as the task of factoring integers, a problem not known to be
amenable to polynomial time algorithms. 
For simple algebras over a number field  $\K$ polynomial time 
Las Vegas algorithms
were given in \cite{Eb1} and \cite{BR} to find a number field $\LL\supseteq
\K$ such that $\A\otimes _{\K}\LL\cong M_n(\LL)$ for a suitable $n$, together 
an explicit representation of the isomorphism. In \cite{Eb3} a real version
was established: if $\K\subset \R$, and $\A$ splits over $\R$, then it can be achieved that $\LL\subset
\R$. These results were  derandomized in part in \cite{Ro4}, and completely 
in \cite{GI}.

Following \cite{Ro3} we recall the notion of an {\em ff-algorithm}. It is an
algorithm which is allowed to call an oracle for two types of subproblems.
These are the problem of factoring integers, and the problem of factoring
polynomials over finite fields. We have no deterministic polynomial time 
algorithms for these problems (but the latter one admits polynomial time
randomized algorithms). In both cases the cost of the oracle call is the
length of the input to the call. 

In \cite{Ro3} the problem of deciding if 
$\A\cong M_n(\K)$ holds for an algebra $\A$ over a number field $\K$ was
shown to be in $NP\cap coNP$. The proof relies on properties of maximal
orders $\Lambda \leq \A$ for central simple algebras $\A$ over $\K$. Maximal
orders  are in many ways analogous to the full ring of algebraic integers in
$\K$. The principal result of \cite{IR} is a polynomial time ff-algorithm to 
construct maximal orders in simple algebras over $\Q$. A very similar
algorithm is presented in \cite{NS}. In \cite{Vo1} a more
direct method is given for quaternion algebras.

Several of the algorithms mentioned here have implementations in the
computer algebra system Magma, see
for example \cite{Ma}.

We mention also a somewhat surprising application of the algorithms for
orders: they have been applied in the construction and analysis of high 
performance space time block codes for wireless communication, see \cite{HLRV},
\cite{Se1}. In fact, in addition to an application of the algorithm of 
\cite{IR}, in \cite{HLRV} an improvement is suggested for the orders relevant 
there. 

The main result of this paper is a polynomial time ff-algorithm for the case
when $\A$ is a central simple algebra of bounded dimension 
over a small extension field $\K$ of $\Q$. 
This was known before only 
in the smallest nontrivial case $\dim _\Q \A=4$, see \cite{ISz} and the more
recent papers \cite{CR}, \cite{Si1}, \cite{Vo1}. More
precisely we have the following.

\begin{theorem} \label{main}
Let $\K$ be an algebraic number field of degree $d$ and discriminant $\Delta$ 
over $\Q$. Let 
$\A$ be an associative algebra over $\K$ given by structure constants such
that $\A\cong M_n(\K)$ holds for some positive integer $n$. Suppose that 
$d$, $n$ and $|\Delta|$ are bounded. Then an isomorphism 
$\A \rightarrow M_n(\K)$ 
can be constructed by a polynomial time ff-algorithm. 
\end{theorem}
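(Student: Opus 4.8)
The plan is to reduce the construction of an isomorphism $\A\to M_n(\K)$ to the problem of exhibiting a single \emph{rank-one element} of $\A$, that is, a nonzero $a\in\A$ with $\Dim_\K\A a=n$; this is the minimal value of $\Dim_\K\A a$, since under any identification $\A\cong M_n(\K)$ one has $\Dim_\K\A a=n\cdot\rank(a)$. Given such an $a$, the left ideal $L=\A a$ is a simple $\A$-module of $\K$-dimension $n$; because $\K=Z(\A)$ acts $\K$-linearly, the left regular action gives a $\K$-algebra homomorphism $\A\to\mathrm{End}_\K(L)\cong M_n(\K)$, which is injective as $\A$ is simple and hence an isomorphism by comparing $\K$-dimensions. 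Picking a $\K$-basis of $L$ makes it explicit; deciding whether a candidate $a$ has rank one, and assembling the map, are routine linear algebra over $\K$.

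To \emph{find} a rank-one element efficiently I would use the arithmetic of maximal orders together with the geometry of numbers. Regard $\A$ as a $\Q$-algebra of dimension $n^2d$ (bounded) and compute a maximal $\Z$-order $\Lambda\subseteq\A$ by the polynomial time ff-algorithm of \cite{IR} (or \cite{NS}). Since $\A\cong M_n(\K)$, $\Lambda$ is an Azumaya $\mathcal O_\K$-algebra, so its discriminant over $\mathcal O_\K$ is trivial and its discriminant over $\Z$ equals $\pm\Delta^{n^2}$, which is bounded. In parallel, compute the real and complex embeddings of $\K$ and, to controlled precision, an explicit archimedean splitting $\A\otimes_\Q\R\cong\prod_{v\mid\infty}M_n(\K_v)$ (obtainable by the methods of \cite{Eb3},\cite{GI} and their complex analogues); transporting the form $x\mapsto\tr(x\,x^{*})$ from the factors yields a positive definite quadratic form $q(x)=\sum_{v\mid\infty}\tr(x_v x_v^{*})$ on $\A\otimes_\Q\R$ for which $\Lambda$ is a lattice of bounded rank and bounded $q$-covolume.

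Next I would LLL-reduce a $\Z$-basis of $\Lambda$ with respect to $q$ and enumerate all $a\in\Lambda$ with $q(a)\le C$, where $C=C(d,n,|\Delta|)$ is an absolute constant. Writing $\Lambda=\mathrm{End}_{\mathcal O_\K}(M)$ for a rank-$n$ projective $\mathcal O_\K$-lattice $M$, a projection of $M$ onto a cyclic summand lies in $\Lambda$ and has rank one, and a routine application of Minkowski's convex body theorem to $M$ (a $\Z$-lattice of covolume $\asymp|\Delta|^{n/2}$) bounds its $q$-norm in terms of $d,n,|\Delta|$ only; choosing $C$ above this bound guarantees a rank-one element among the enumerated vectors. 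Since the ambient rank, the covolume and $C$ are all bounded, the enumeration yields $O(1)$ candidates, each with bounded integer coordinates in the reduced basis, so it runs in polynomial time; testing each for rank one and halting at the first success completes the construction of $\phi$.

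The step I expect to be the real obstacle is making the second and third paragraphs rigorous: one must prove that \emph{every} maximal order of $\A$ has bounded $\Z$-discriminant (hence bounded covolume) and contains a rank-one element of bounded $q$-norm, uniformly in the input, and one must carry out the numerical computation of the archimedean splitting and of $q$ with enough guaranteed precision that the LLL reduction and short-vector enumeration are provably correct, while keeping all bit-lengths polynomial in the input size. Together with the fact that number fields of bounded degree and bounded discriminant form a finite family — so that $\mathcal O_\K$, its units, and its embeddings are of bounded complexity — this gives the claimed polynomial time ff-algorithm. (One can also replace the brute-force search by an explicit parametrization of the Severi--Brauer variety $\mathbb P^{n-1}$ of $\A$, which is the route I would expect to yield the sharpest bounds.) The statement for central simple algebras of bounded degree then follows by first passing to a splitting field of bounded degree, as in \cite{GI}, and descending.
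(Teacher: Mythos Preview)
Your outline matches the paper's strategy closely: compute a maximal order, embed it as a lattice in $\R^{n^2d}$ via the archimedean places, LLL-reduce, and search for a rank-one element among short lattice vectors. Two genuine gaps remain, however, and they are precisely the points where the paper does its real work.

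First, the existence argument you sketch --- ``a projection of $M$ onto a cyclic summand lies in $\Lambda$ and has rank one, and a routine application of Minkowski to $M$ bounds its $q$-norm'' --- does not go through as stated. The form $q$ comes from the \emph{given} embeddings $\phi_i$, which differ from the structural isomorphism $\psi:\A\to M_n(\K)$ by conjugation matrices $A_i$ whose entries are completely uncontrolled; only $|\det A_i|=1$ is known. A fixed algebraic projection such as $\psi^{-1}(e_{11})$ can therefore have arbitrarily large $q$-norm. The paper's Theorem~\ref{rankone1} gets around this by applying Minkowski \emph{twice}: once to the column lattice $(R,\ldots,R,J)$ transformed by $(A_i^{-1})^T$, yielding a short $\mathbf{u}$, and once to the row lattice $(R,\ldots,R,J^{-1})$ transformed by $A_i$, yielding a short $\mathbf{v}$. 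The rank-one element is $\psi^{-1}(\mathbf{u}^T\mathbf{v})$, and submultiplicativity bounds each $\phi_i$-image by the product of the two radii. The norm $N(J)$ of the Steinitz ideal cancels between $r(J)$ and $r(J^{-1})$, and the $A_i$ disappear because each has determinant $\pm 1$, leaving a bound $b=(2/\pi)^{2s/d}\Delta^{1/d}$ depending only on $d,n,\Delta$. This two-sided product trick is the heart of the argument; a single application of Minkowski to $M$ does not suffice.

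Second, your claim that enumerating all $a\in\Lambda$ with $q(a)\le C$ gives $O(1)$ candidates is false: bounded rank and bounded covolume do not bound the number of short lattice points, because $\Lambda$ can contain arbitrarily short nilpotents (for instance $XEX^{-1}=t^2E$ with $E=e_{12}$ in a conjugate of $M_2(\Z)$ as $t\to 0$). Hence coordinates in a reduced basis need not be bounded. The paper handles this with a recursive step you omit: if some reduced basis vector $\mathbf{b}_i$ is a zero divisor (and by Lemma~\ref{singular1} this is automatic once $|\mathbf{b}_i|<\sqrt n$), one computes the associated idempotent $e$ and restarts in the corner algebra $e\A e\cong M_k(\K)$ with $k<n$. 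Only after all $\mathbf{b}_i$ are invertible does one have $|\mathbf{b}_i|\ge\sqrt n$, whereupon Lemma~\ref{coefficients} bounds the coefficients of the target rank-one element by $c_m\, b\, n\sqrt{r+s}/|\mathbf{b}_i|$, and the enumeration in Step~6 is genuinely bounded.
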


We remark, that the algorithm of Theorem \ref{main} gives an explicit 
isomorphism even if we do not assume that $\log |\Delta|$, $n$, and  $d$
are bounded. However, the running time then may be exponential in these 
parameters. This holds also for the algorithmic applications given in the 
last section of the paper.

\medskip

In addition to computational representation theory where the problem 
naturally originates from, the explicit isomorphism problem arises 
also in connection with computational problems of arithmetic geometry: 
in a series of seminal papers \cite{CFNSS1}, \cite{CFNSS2}, and 
\cite{CFNSS3} the $n$-Selmer group of an elliptic curve $E$ over a number
field $\K$ is studied. 
A method is developed to represent the elements of the Selmer group as 
genus one normal curves of degree $n$. One of the key ingredients 
of their method is to solve the explicit isomorphism problem for 
$M_n(\K)$.  In \cite{CFNSS3} an algorithm is outlined for the explicit
isomorphism problem over $\K=\Q$, and is detailed for the cases $n=3,5$. 
Our approach is based on similar ideas.

An algorithm for explicit isomorphisms is useful also for computing 
parametrizations in algebraic geometry: 
\cite{CR} considers parametrizations of conics, and \cite{GHPS} gives
algorithms for rational parametrization of Severi-Brauer surfaces.   
In fact, in \cite{GHPS} an algorithm is given which solves the explicit
isomorphism problem when $\A\cong M_3 (\Q)$. This, however, uses a procedure
for solving norm equations whose complexity was not clear so far. For
example it was not known if they can be solved in ff-polynomial time. 
The case $\A\cong M_4(\Q)$ is treated similarly in \cite{Pi1}.

The organization of the paper is as follows. First, in Section 2  we prove 
Theorem \ref{main} in the simpler case $\K=\Q$. This combines the approach
of Fisher \cite{AL} (that is used in \cite{CFNSS3} as well), which considers 
a real embedding of $\A$, 
with an application of 
Minkowski's theorem on convex bodies, and with approximate lattice 
basis reduction. In the next section the argument is
extended to number fields. An important role is played here by the
traditional map in algebraic number theory which maps $\K$ into $\R^d$, 
see Section 13, Chapter I. in \cite{Jan}. 

In the last section two applications are presented. One of these is 
a polynomial time  ff-algorithm to compute ismorphisms of central simple 
algebras of bounded degree over $\K$.   

\medskip

\noindent
{\bf Acknowledgement.} We are grateful to G\'eza K\'os and S\'andor Z. Kiss for 
discussions on the subject. We thank Jacques-Arthur Weil for calling our
attention to \cite{AL}.

\section{Full matrix algebras over $\Q$}  

Here we consider the case $\K=\Q$ of Theorem \ref{main}. We prove first a
statement on the existence of small and highly singular elements in maximal
orders.

\begin{theorem} \label{rankone}
Let $\A$ be a $\Q$-subalgebra of $M_n(\R)$
isomorphic to $M_n(\Q)$ and let $\Lambda$  
be a maximal $\Z$-order in $\A$.
Then there exists an element $C\in \Lambda$  which has rank 1 as a matrix, 
and whose Frobenius norm $\|C\|$ is less than $n$.
\end{theorem}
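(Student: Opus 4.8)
The plan is to use Minkowski's convex body theorem applied to a lattice built from the maximal order $\Lambda$, inside the real vector space $M_n(\R)$ equipped with (a scaled version of) the Frobenius norm. The key idea is that $\Lambda$ is a free $\Z$-module of rank $n^2$ sitting as a full-rank lattice in $\A \cong M_n(\R)$, and its covolume can be controlled: because $\Lambda$ is a \emph{maximal} order, its discriminant has a predictable size (in fact the discriminant of $M_n(\Q)$ over $\Q$ is $\pm 1$, so a maximal order has the minimal possible covolume with respect to the trace form), which pins down the covolume of $\Lambda$ in the Frobenius inner product up to an explicit constant. First I would identify $\A$ with $M_n(\R)$ so that $\Lambda$ becomes an explicit lattice, compute (or bound from above) its covolume, and then invoke Minkowski to produce a nonzero lattice point $C \in \Lambda$ of small Frobenius norm — small enough that $\|C\| < n$ can be arranged, possibly after first intersecting $\Lambda$ with a suitable sublattice or by using the centrally symmetric convex body that is a ball (or a cleverly chosen box) of the right volume.

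The second ingredient is to turn ``small norm'' into ``rank $1$''. Here the point is that the Frobenius norm of a matrix $C$ of rank $r$ with nonzero integer-like invariants is bounded below: if $C \in \Lambda$ is nonzero then its reduced characteristic polynomial (= characteristic polynomial of the $n\times n$ matrix) has coefficients that are algebraic integers — in fact rational integers, since the reduced trace and reduced norm on a maximal order in $M_n(\Q)$ take values in $\Z$. If $C$ has rank $r \ge 2$, then $C$ has at least two nonzero singular values, forcing $\|C\|^2 = \sum \sigma_i^2$ to be reasonably large; more precisely one argues that the nonzero eigenvalues of $C^\ast C$ cannot all be tiny because appropriate symmetric functions of them are nonzero integers (the $r$-th coefficient of the characteristic polynomial of $C^\ast C$, or of $C$ itself, is a nonzero rational integer hence has absolute value $\ge 1$). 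By AM–GM this forces $\|C\|^2 = \sum_{i=1}^r \sigma_i^2 \ge r$, so a matrix with $\|C\| < \sqrt{n}$ — or with a bit more care $\|C\| < n$ — must have rank $r \le n-1$ strictly, and iterating or sharpening the bound drives $r$ down to $1$. So the strategy is: choose the Minkowski bound so tight that any nonzero $C \in \Lambda$ achieving it is forced by the integrality of the coefficients of its characteristic polynomial to have rank exactly $1$.

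I would carry this out in the order: (i) set up $\Lambda$ as a lattice in $(M_n(\R), \langle\,,\,\rangle_{\mathrm{Frob}})$ and bound its covolume using maximality of the order and the fact that $\mathrm{disc}(M_n(\Q)/\Q) = \pm 1$, being careful that the Frobenius form differs from the reduced trace form $\mathrm{Tr}(XY)$ only by a known factor; (ii) apply Minkowski (either the ball version, losing a dimensional constant from the volume of the unit ball, or a box version if that gives a cleaner bound) to get a nonzero $C \in \Lambda$ with $\|C\|$ below an explicit threshold; (iii) use the integrality of the coefficients of the reduced characteristic polynomial of $C$ together with AM–GM on the singular values to show $C$ has rank $1$, provided the threshold was chosen as $n$ (or smaller). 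The main obstacle I expect is step (i): getting the covolume estimate clean enough that the Minkowski bound really is below $n$ rather than merely below some larger $c(n)$ — this is where maximality of $\Lambda$ is essential and where one has to be careful about which bilinear form is being used and how the volume of a ball in dimension $n^2$ behaves. A secondary subtlety is that Minkowski may first only give rank $\le$ something, and one may need to pass to a smaller order / sublattice (e.g. the order in the centralizer of a rank-deficient element already found, or in a corner subalgebra) and induct, which complicates the bookkeeping but not the essential idea.
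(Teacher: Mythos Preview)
Your proposal has a genuine gap at step~(iii), the ``small norm forces rank~1'' step, and it cannot be repaired along the lines you sketch.

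The integrality argument you propose does not go through. You want the elementary symmetric functions of the nonzero singular values $\sigma_i^2$ of $C$ to be nonzero integers, so that AM--GM gives $\|C\|^2=\sum\sigma_i^2\ge r$. But the $\sigma_i^2$ are the eigenvalues of $C^TC$, and $C^T$ has no reason to lie in $\Lambda$ (or even in $\A$): transpose is an anti-automorphism of $M_n(\R)$, not of the conjugated subalgebra $\A$. So nothing forces the characteristic polynomial of $C^TC$ to have integer coefficients. If instead you use the characteristic polynomial of $C$ itself, which does have integer coefficients, you run into nilpotents: a rank-$r$ element of $\Lambda$ can perfectly well have characteristic polynomial $x^n$, with every nontrivial coefficient equal to $0$, so no lower bound on $\|C\|$ follows. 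Concretely (and the paper remarks on this), if $E\in M_n(\Z)$ is a nonzero nilpotent then $QEQ^{-1}\in\Lambda$ can have arbitrarily small Frobenius norm for suitable $Q$. Thus Minkowski applied to the full $n^2$-dimensional lattice $\Lambda\subset M_n(\R)$ gives you a nonzero $C$ with $\|C\|\lesssim n\sqrt{2/(\pi e)}<n$, but for $n\ge 5$ this is not even below $\sqrt n$, so you cannot conclude that $C$ is singular, let alone of rank~$1$. Your fallback of passing to $e\A e$ and inducting therefore never gets started.

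The paper's argument avoids this entirely by never trying to deduce rank~1 from smallness. Instead it writes $\Lambda=Q\,M_n(\Z)\,Q^{-1}$ with $\det Q=\pm 1$, and applies Minkowski twice in dimension~$n$ (not $n^2$): once to the lattice $Q\rho$ of ``first columns'' to get a rank-one $QB$ with $\|QB\|<\sqrt n$, and once to the lattice of ``first rows'' times $Q^{-1}$ to get a rank-one $B'Q^{-1}$ with $\|B'Q^{-1}\|<\sqrt n$. Then $C=(QB)(B'Q^{-1})=Q(BB')Q^{-1}\in\Lambda$ is rank~1 by construction, and $\|C\|\le\|QB\|\,\|B'Q^{-1}\|<n$ by submultiplicativity of the Frobenius norm. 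The rank-one structure is built in from the start rather than extracted afterwards.
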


\medskip

\begin{remark}
When we apply the above theorem, the Frobenius norm $\|\cdot \|$  
will be inherited from  $M_n(\R)$, with respect
to an {\em arbitrary}  embedding of $\A$ into $M_n(\R)$. Recall that for a 
matrix $X\in M_n(\R)$ we have  $\| X\| =\sqrt{Tr(X^TX)}$.  
\end{remark}

\medskip

\begin{proof}
The isomorphism $\A\cong M_n(\Q)$ extends to an automorphism
of $M_n(\R)$. Therefore, by the Noether-Skolem Theorem, 
there exists a matrix $P\in M_n(\R)$
such that $\A=PM_n(\Q)P^{-1}$. Let $\Lambda'$  
denote the standard maximal order $M_n(\Z)$ in $M_n(\Q)$. 
The 
theory of maximal orders in central simple algebras over $\Q$ implies that there exists an 
invertible rational matrix $P'\in M_n(\Q)$  such that it gives us 
$P^{-1}\Lambda P$ from $\Lambda'$: 
$P^{-1}\Lambda P=P' \Lambda'{P'}^{-1},$ whence
$\Lambda =PP' \Lambda'{P'}^{-1}P^{-1}. $
Set $Q=PP'/(|\det P\det P'|)^{1/n}$. Clearly $Q\in M_n(\R)$, 
$\det Q$ is $\pm 1$ and 
$$\Lambda = Q \Lambda'Q^{-1}. $$

Let $\rho$ denote the  left ideal of $\Lambda '$ consisting of all 
integer matrices which have 0 everywhere except in the first column.
Clearly $\rho$ is a lattice of determinant 1 in the linear space $S$ of all 
real matrices having nonzeros only in the first column. 
Now the lattice $L=Q\rho$ will be a
sublattice of $S$, with determinant 1 (see Subsection 2.2 from \cite{Ma1}
for basic facts on lattices in real Euclidean spaces).

We can apply Minkowski's theorem on lattice points in convex bodies 
to $L$ in $S$, and to the ball of
radius $\sqrt{n}$ in $S$ centered at the zero matrix 
(we refer here to the Euclidean
distance, that is, the Frobenius norm on $M_n(\R)$).  The volume (calculated in $S$) of
the ball is more than $2^n$, as it contains $2^n$ internally disjoint copies
of the $n$-dimensional unit cube, and more. We infer that there exists an
element $B\in \rho$ such that $QB$ is a nonzero matrix whose length is less
than $\sqrt{n}$.  Clearly $B$ and hence $QB$ is a rank 1 matrix.

Next consider the "transpose" of this argument with $Q^{-1}$ in the place of
$Q$: there exists a nonzero integer matrix $B'$, which is zero everywhere
except  in the first row, such that $B'Q^{-1}$ is nonzero, and has Euclidean 
length less than $\sqrt{n}$.

Now 
$$ C=QBB'Q^{-1} $$ 
meets the requirements of the statement. Indeed, it is in
$\Lambda$ because $BB'\in M_n(\Z)$. It has length less than $n$ because the 
Frobenius norm is submultiplicative:
$$ \| C\| =\|(QB)(B'Q^{-1})\|\leq \|QB\| \cdot \|B'Q^{-1}\|< (\sqrt{n})^2=n.$$
Obviously, $C$ has rank at most 1, as $B$ and $B'$ are  of
rank 1. Finally, from the shape of $B$ and $B'$ we see, that $BB'\not= 0$, hence 
$ \rank \, BB'= \rank \, C= 1$. This finishes the proof.
\end{proof}

\medskip

\medskip

\noindent
{\bf Remark.} Essentially the above reasoning shows the existence of a rank
one $C\in \Lambda$ such that $\|C \|\leq \gamma_n$, where $\gamma_n$ is
Hermite's constant  (see Chapter IX, \cite{Ca1}).
This bound is achieved if we
select  $B$ and $B'$ whose norm is at most most $\sqrt{\gamma_n}$. This 
gives
a better bound for large values of $n$.

\medskip

The following two lemmas point out that elements $X$ form an 
order $\Lambda
\subset M_n(\Q)$ with $\|X\|$ small are necessarily zero divisors. 

\medskip

\begin{lemma} \label{singular} Let $X\in M_n(\C)$ be a matrix such that 
$\det X$ is an 
integer, and $\|X\|<\sqrt{n}$. Then $X$ is a singular matrix. 
\end{lemma}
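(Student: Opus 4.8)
The plan is to use the AM–GM inequality relating the Frobenius norm to the determinant. First I would recall that for any $X\in M_n(\C)$, writing $\sigma_1,\dots,\sigma_n$ for the singular values of $X$ (equivalently, the square roots of the eigenvalues of $X^*X$), one has $\|X\|^2=\operatorname{Tr}(X^*X)=\sum_{i=1}^n\sigma_i^2$ and $|\det X|^2=\det(X^*X)=\prod_{i=1}^n\sigma_i^2$. Applying the arithmetic–geometric mean inequality to the nonnegative reals $\sigma_1^2,\dots,\sigma_n^2$ gives
$$ |\det X|^{2/n}=\left(\prod_{i=1}^n\sigma_i^2\right)^{1/n}\le \frac1n\sum_{i=1}^n\sigma_i^2=\frac{\|X\|^2}{n}. $$

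Now suppose, for contradiction, that $X$ is nonsingular. Then $\det X$ is a nonzero integer, so $|\det X|\ge 1$, hence $|\det X|^{2/n}\ge 1$. Combining this with the displayed inequality yields $1\le \|X\|^2/n$, i.e. $\|X\|\ge\sqrt n$, contradicting the hypothesis $\|X\|<\sqrt n$. Therefore $X$ must be singular.

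The only point that needs a word of care is the identification of $|\det X|^2$ with the product of the squared singular values and of $\|X\|^2$ with their sum; both follow from the singular value decomposition $X=U\Sigma V^*$ with $U,V$ unitary and $\Sigma=\operatorname{diag}(\sigma_1,\dots,\sigma_n)$, using the unitary invariance of the Frobenius norm and multiplicativity of the determinant. There is no real obstacle here — the argument is a direct application of AM–GM once the spectral data is set up — so the main thing is simply to phrase the singular-value bookkeeping cleanly.
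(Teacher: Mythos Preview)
Your proof is correct and follows essentially the same approach as the paper: both arguments establish the inequality $|\det X|^{2/n}\le \|X\|^2/n$ via AM--GM after a unitary-invariant decomposition, and then conclude that $\det X=0$ since it is an integer of absolute value less than $1$. The only cosmetic difference is that the paper uses the QR decomposition (applying AM--GM to the diagonal entries of $R$), whereas you use the singular value decomposition; your version is arguably slightly cleaner since $\sum_i\sigma_i^2=\|X\|^2$ on the nose.
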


\begin{proof} The argument is essentially from \cite{AL}. Let $X=QR$ be the 
QR decomposition of $X$, with $Q$ unitary 
and $R$ an upper triangular  matrix whose diagonal entries are $r_1,
r_2, \ldots ,r_n$. We have 
$$ |\det X|^{2/n}=(|r_1|^2|r_2|^2\cdots |r_n|^2)^{1/n}\leq \frac 1n
(|r_1|^2+|r_2|^2+\cdots |r_n|^2)\leq \frac 1n \|R\|^2= \frac 1n \|X\|^2<1.$$
Here we used the fact that $\|X\|=\sqrt{Tr(X^*X)}=\sqrt{Tr(R^*R)}$ because
$Q^*Q=I$. We conclude that $\det X=0$.
\end{proof}

The next statement has a similar flavour. It was pointed out to us 
by our colleague G\'eza K\'os.

\begin{lemma} \label{nilpotent} 
Let $X\in M_n(\Q)$ be a matrix whose characteristic polynomial 
has integral coefficients, and $\|X\|<1$. Then $X$ is a nilpotent matrix.
\end{lemma}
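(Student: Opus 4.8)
The plan is to argue that the eigenvalues of $X$ are all $0$. Let $\lambda_1,\dots,\lambda_n\in\C$ be the eigenvalues of $X$, listed with multiplicity. The characteristic polynomial having integral coefficients means that every elementary symmetric function of the $\lambda_i$ is an integer; in particular the product $\lambda_1\cdots\lambda_n = \pm\det X$ is an integer. The key inequality is that $\sum_i|\lambda_i|^2 \le \|X\|^2$: this follows, as in Lemma~\ref{singular}, from the Schur (or QR/triangularization) decomposition $X = QRQ^*$ with $Q$ unitary and $R$ upper triangular whose diagonal entries are exactly the $\lambda_i$, since then $\|X\|^2 = \Tr(X^*X) = \Tr(R^*R) \ge \sum_i|\lambda_i|^2$. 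Hence $\sum_i|\lambda_i|^2 \le \|X\|^2 < 1$, so by the AM--GM inequality
$$ |\det X|^{2/n} = \Bigl(\prod_i|\lambda_i|^2\Bigr)^{1/n} \le \frac1n\sum_i|\lambda_i|^2 < \frac1n \le 1, $$
forcing $\det X = 0$.

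This alone only gives that $0$ is an eigenvalue, not that $X$ is nilpotent, so the next step is to upgrade this to all coefficients of the characteristic polynomial vanishing. Write the characteristic polynomial as $t^n - c_1 t^{n-1} + \cdots \pm c_n$, where $c_k = e_k(\lambda_1,\dots,\lambda_n)$ is the $k$-th elementary symmetric function and $c_k\in\Z$. The bound $\sum_i|\lambda_i|^2 < 1$ implies $|\lambda_i| < 1$ for every $i$, and a fortiori any product of $k$ of them has modulus less than $1$; combined with the crude count $|e_k(\lambda_1,\dots,\lambda_n)| \le \binom nk \max_i|\lambda_i|^k$, this does not immediately close the gap because $\binom nk$ can be large. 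The cleaner route is to bound $e_k$ in terms of the power-sum-type quantity $\sum_i|\lambda_i|^2$: by Maclaurin's inequality (or directly, $|e_k| \le \binom nk (p/n)^{k/2}$ where $p = \sum_i|\lambda_i|^2$, which itself follows from AM--GM applied to the $k$-fold products), one gets $|e_k| \le \binom nk (p/n)^{k/2} < \binom nk n^{-k/2}$. For $k\ge 1$ we have $\binom nk n^{-k/2} \le n^k/(k!\, n^{k/2}) = n^{k/2}/k! $, which is not bounded by $1$ for moderate $k$ either, so a naive estimate still fails; the honest fix is to observe that once we know $p < 1$ we can instead bound $\prod_{i\in S}|\lambda_i|$ for each $k$-subset $S$ by AM--GM as $(p/k)^k$ or simply by $p^{?}$ — the point being that each such product is a nonnegative real number, their integer-valued symmetric combination $c_k$ must therefore satisfy $|c_k| < 1$ provided we show $\sum_{|S|=k}\prod_{i\in S}|\lambda_i| < 1$.

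The most reliable way to make that last bound work is to pass through $\wedge^k X$: the $k$-th exterior power of $X$ has eigenvalues exactly the $k$-fold products $\lambda_{i_1}\cdots\lambda_{i_k}$, its trace is $c_k = e_k(\lambda_i)$, and — crucially — its characteristic polynomial again has integer coefficients (the entries of $\wedge^k X$ are the $k\times k$ minors of $X$, which are polynomials with integer coefficients in the rational entries of $X$, so the coefficients of $\det(tI - \wedge^k X)$ are symmetric integer polynomials in those same data... more carefully: $\wedge^k X$ acts on $\wedge^k\Q^n$, preserving the lattice $\wedge^k\Z^n$ up to denominators, and its characteristic polynomial's coefficients are $\Z$-polynomial expressions in the coefficients of the characteristic polynomial of $X$, hence integers). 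Then $\|\wedge^k X\|^2 = \sum_{|S|=k}\prod_{i\in S}|\lambda_i|^2 \le \bigl(\sum_i|\lambda_i|^2\bigr)^k < 1$, and applying Lemma~\ref{singular} to $\wedge^k X$ (which has integer determinant, being $\pm(\det X$ to a power$)$, and norm $<\sqrt{\binom nk}$ — wait, we need norm $<\sqrt{\binom nk}$ for a $\binom nk$-dimensional matrix, and indeed $\|\wedge^k X\| < 1 \le \sqrt{\binom nk}$) we would only conclude $\det\wedge^k X = 0$ again. So instead one argues directly: $|c_k| = |\Tr \wedge^k X| \le \binom nk^{1/2}\|\wedge^k X\|$ by Cauchy--Schwarz on traces, and $\|\wedge^k X\| < 1$, giving $|c_k| < \binom nk^{1/2}$, still not enough.

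The clean finish — and the step I expect to be the genuine crux — is this: since $|\lambda_i| < 1$ for all $i$, the characteristic polynomial $\prod_i(t-\lambda_i)$ has all roots strictly inside the unit disc, so by Kronecker's theorem a monic integer polynomial all of whose roots lie in the closed unit disc has every root either $0$ or a root of unity; but roots of unity have modulus exactly $1$, which is excluded here, so every $\lambda_i = 0$, i.e.\ $X$ is nilpotent. Thus the proof is: deduce $\sum|\lambda_i|^2 \le \|X\|^2 < 1$ via triangularization, infer each $|\lambda_i| < 1$, then invoke Kronecker's theorem on the (integer, monic) characteristic polynomial to force all $\lambda_i = 0$. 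The main obstacle is recognizing that the elementary symmetric-function estimates do not close by themselves and that one must bring in an arithmetic input — Kronecker's theorem — rather than a purely analytic one; everything else is routine.
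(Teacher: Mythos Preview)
Your final argument is correct: from $\|X\|<1$ you deduce $|\lambda_i|<1$ for every eigenvalue, and then Kronecker's theorem applied to the monic integer characteristic polynomial forces each $\lambda_i=0$, so $X$ is nilpotent. (The long detour through elementary symmetric estimates and exterior powers is unnecessary; you only need the spectral-radius bound $|\lambda_i|\le\|X\|$, which already follows from submultiplicativity of the Frobenius norm, and then Kronecker.)

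The paper takes a different, more self-contained route. It never isolates individual eigenvalues or invokes Kronecker. Instead it uses submultiplicativity directly on the matrix: $\|X^t\|\le\|X\|^t\to 0$, while the characteristic polynomial of $X^t$ again has integer coefficients (its roots $\lambda_i^t$ are algebraic integers, and the coefficients are rational since $X^t\in M_n(\Q)$, hence integral). Integer sequences converging to $0$ are eventually $0$, so for large $t$ the characteristic polynomial of $X^t$ is $\lambda^n$, and Cayley--Hamilton gives $X^{tn}=0$. This is essentially the classical proof of Kronecker's theorem itself, carried out at the level of the matrix rather than a single algebraic integer; so your approach and the paper's are cousins, but the paper keeps everything elementary while you outsource the arithmetic step to a named result. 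Your remark that ``one must bring in an arithmetic input rather than a purely analytic one'' is right, but the paper shows that the needed arithmetic input is much lighter than Kronecker: just the discreteness of $\Z$.
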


\begin{proof} The eigenvalues of $X$ are algebraic integers, hence the
eigenvalues of $X^t$ are algebraic integers as well, for any positive
integer $t$. We infer that the characteristic polynomial of $X^t$ has
integral coefficients. Also, the norm condition implies that $X^t$ tends to
the zero matrix $O$ as $t\rightarrow \infty $, hence $X^t=O$ for a
sufficiently large $t$.
\end{proof}

\medskip

The following argument is from H. W. Lenstra, see  p. 546 in \cite{Le}.
Informally, it states that the coefficients with respect to a reduced basis 
of a vector $\ve v$  with small length $|\ve v|$ from a lattice 
$\Gamma$ are relatively small.

\begin{lemma} \label{coefficients}
Let $\Gamma$ be a full lattice in $\R^m$. 
Suppose that we have a basis $\ve b_1,\ldots, \ve  b_m$ of $\Gamma $ over 
$\Z$ such that 
\begin{equation}
\label{basiseq}
 |\ve b_1|\cdot |\ve b_2|\cdots |\ve b_m|\leq c_m\cdot \det(\Gamma) 
\end{equation}
holds for a real number $c_m>0$. Suppose that 
$$\ve v =\sum_{i=1}^m\gamma_i \ve b_i \in \Gamma,~~~\gamma_i\in \Z. $$ 
Then we have 
$|\gamma_i|\leq c_m \frac{|\ve v |} {|\ve b_i|}$ for $i=1,\ldots, m$. 
\end{lemma}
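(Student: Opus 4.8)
The plan is to use the Gram--Schmidt orthogonalization of the basis $\ve b_1,\ldots,\ve b_m$ as a bridge between the hypothesis \eqref{basiseq} and the conclusion. Write $\ve b_i^* $ for the Gram--Schmidt vectors, so that $\det(\Gamma)=\prod_{i=1}^m|\ve b_i^*|$, and recall the two standard facts: $|\ve b_i^*|\le |\ve b_i|$ for every $i$, and, for any lattice vector $\ve v=\sum_i\gamma_i\ve b_i$ with integer $\gamma_i$, if $j$ is the largest index with $\gamma_j\ne 0$ then the component of $\ve v$ along $\ve b_j^*$ is exactly $\gamma_j\ve b_j^*$, whence $|\ve v|\ge |\gamma_j|\cdot|\ve b_j^*|$. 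The first fact combined with \eqref{basiseq} gives, for each fixed $i$,
\[
 |\ve b_i^*| \;=\; \frac{\det(\Gamma)}{\prod_{k\ne i}|\ve b_k^*|}
 \;\ge\; \frac{\det(\Gamma)}{\prod_{k\ne i}|\ve b_k|}
 \;\ge\; \frac{|\ve b_i|}{c_m},
\]
using $\prod_k |\ve b_k|\le c_m\det(\Gamma)$ in the last step. So $|\ve b_i^*|\ge |\ve b_i|/c_m$ for all $i$.

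Next I would fix $i$ and argue that $|\gamma_i|\le c_m|\ve v|/|\ve b_i|$. The clean way is to apply the second fact above not to $\ve v$ itself but to the truncated vector $\ve v_i:=\sum_{k\le i}\gamma_k\ve b_k$: since its top nonzero index is at most $i$, and if $\gamma_i\ne 0$ it is exactly $i$, we get $|\ve v_i|\ge |\gamma_i|\cdot|\ve b_i^*|\ge |\gamma_i|\cdot|\ve b_i|/c_m$. It remains to bound $|\ve v_i|$ by $|\ve v|$. This follows because $\ve v_i$ is the orthogonal projection of $\ve v$ onto the span of $\ve b_1,\ldots,\ve b_i$ (equivalently onto the span of $\ve b_1^*,\ldots,\ve b_i^*$): indeed $\ve v-\ve v_i=\sum_{k>i}\gamma_k\ve b_k$ lies in the span of $\ve b_1^*,\ldots,\ve b_m^*$ but its projection onto $\ve b_1^*,\ldots,\ve b_i^*$ vanishes, because each $\ve b_k$ with $k>i$ has zero Gram--Schmidt components in positions $1,\ldots,i$... wait, that is not right, so instead I would note directly that $\ve v = \ve v_i + (\text{vector in span of }\ve b_{i+1}^*,\dots,\ve b_m^*)$ and $\ve v_i\in\mathrm{span}(\ve b_1^*,\dots,\ve b_i^*)$, these two subspaces being orthogonal, so $|\ve v|^2=|\ve v_i|^2+(\cdots)^2\ge|\ve v_i|^2$. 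Combining, $|\gamma_i|\le c_m|\ve v_i|/|\ve b_i|\le c_m|\ve v|/|\ve b_i|$, and if $\gamma_i=0$ the inequality is trivial.

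The only genuinely delicate point is the orthogonal-decomposition claim in the last paragraph, namely that $\ve v_i$ (the partial sum) is precisely the orthogonal projection of $\ve v$ onto $\mathrm{span}(\ve b_1,\dots,\ve b_i)$. This is where I should be careful: it is true because $\ve b_1,\dots,\ve b_i$ span the same subspace as $\ve b_1^*,\dots,\ve b_i^*$, and when one expands $\ve v=\sum_{k=1}^m\gamma_k\ve b_k$ in the Gram--Schmidt basis, every $\ve b_k$ with $k>i$ contributes only to coordinates $i+1,\dots,m$ (by the triangular nature of Gram--Schmidt), while every $\ve b_k$ with $k\le i$ contributes only to coordinates $1,\dots,i$; hence the coordinates $1,\dots,i$ of $\ve v$ come entirely from $\ve v_i$, which is exactly the assertion. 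Everything else is a routine chain of inequalities, so I expect no serious obstacle beyond stating this decomposition cleanly.
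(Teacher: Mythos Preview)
Your orthogonal-decomposition claim in the last paragraph is wrong: Gram--Schmidt triangularity runs in the opposite direction. Since $\ve b_k=\ve b_k^*+\sum_{j<k}\mu_{kj}\ve b_j^*$, a vector $\ve b_k$ with $k>i$ has components along \emph{all} of $\ve b_1^*,\dots,\ve b_k^*$, not only along $\ve b_{i+1}^*,\dots,\ve b_m^*$. Consequently $\ve v-\ve v_i=\sum_{k>i}\gamma_k\ve b_k$ need not lie in $\mathrm{span}(\ve b_{i+1}^*,\dots,\ve b_m^*)$, the partial sum $\ve v_i$ is \emph{not} the orthogonal projection of $\ve v$ onto $\mathrm{span}(\ve b_1,\dots,\ve b_i)$, and the inequality $|\ve v_i|\le|\ve v|$ can genuinely fail. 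Concretely, take $m=2$, $\ve b_1=(1,0)$, $\ve b_2=(1,1)$, and $\ve v=2\ve b_1-\ve b_2=(1,-1)$: then $\ve v_1=2\ve b_1=(2,0)$ has length $2>\sqrt 2=|\ve v|$, so your chain $|\gamma_1|\le c_m|\ve v_1|/|\ve b_1|\le c_m|\ve v|/|\ve b_1|$ breaks at the second step.

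The idea is easily salvaged. For a fixed $i$, reorder the basis so that $\ve b_i$ comes last and redo Gram--Schmidt; then the ``largest index'' fact does apply and gives $|\ve v|\ge|\gamma_i|\,|\tilde{\ve b}_i^*|$, where $\tilde{\ve b}_i^*$ is the component of $\ve b_i$ orthogonal to the span of the remaining basis vectors, and your Hadamard-type estimate still yields $|\tilde{\ve b}_i^*|\ge|\ve b_i|/c_m$. The paper, however, avoids Gram--Schmidt entirely: it writes $|\gamma_i|=|\det(\ve b_1,\dots,\ve b_{i-1},\ve v,\ve b_{i+1},\dots,\ve b_m)|/\det(\Gamma)$ by Cramer's rule, bounds the numerator via Hadamard's inequality by $|\ve v|\prod_{k\ne i}|\ve b_k|$, and then applies the hypothesis. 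That route is two lines and sidesteps all the orthogonalization bookkeeping.
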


\begin{proof}
From Cramer's rule we obtain 
$$ |\gamma_i|=\frac{|\det (\ve b_1,\ve b_2, \ldots, \ve b_{i-1},\ve v, 
\ve b_{i+1}, 
\ldots
,\ve b_m)|}{\det(\Gamma)}\leq \frac{|\ve b_1|\cdots |\ve b_{i-1}|\cdot
|\ve v |\cdot
|\ve b_{i+1}|\ldots |\ve b_m|}{\det (\Gamma)}  =$$.
$$= \frac{|\ve v|}{|\ve b_i|}\cdot \frac{|\ve b_1|\cdots |\ve b_{i-1}|
\cdot |\ve b_i|\cdot
|\ve b_{i+1}|\cdots |\ve b_m|}{\det (\Gamma)}\leq \frac{|\ve v|}{|\ve
b_i|}\cdot c_m \cdot
\frac
{\det (\Gamma)}{\det (\Gamma)}= c_m \cdot \frac{|\ve v|}{|\ve b_i|}.$$
\end{proof}

We remark that the LLL algorithm gives a basis with $c_m=2^{m(m-1)/4}$ in 
formula (\ref{basiseq}), see \cite{LLL}. We shall have a lattice of vectors with nonrational 
coordinates, and thus invoke the approximate version of the LLL algorithm 
developed by Buchmann, see Corollary 4 of \cite{Bu1}. This will provide 
a reduced basis with 
\begin{equation} \label{red}
c_m:= \left(\gamma_m \right)^{\frac m2}\left( \frac 32 \right)^m
2^{\frac{m(m-1)}{2}}.
\end{equation}
Here $\gamma_m$ is Hermite's constant. It is 
known that $\gamma_m\leq m$ for all integers $m\geq 1$, and 
$\frac{\gamma_m}{m}\leq \frac{1}{\pi e}+o(1)$ for $m$ large.  

\medskip

We can describe now the algorithm of Theorem \ref{main} for the case
$\K=\Q$. 
Suppose that, as input, we have an algebra ${\cal A}$ over $\Q$, given to us
by structure constants. Suppose also that ${\cal A}$ is isomorphic to the 
full matrix algebra $M_n(\Q)$. Our objective is to give this isomorphism 
explicitly. More specifically the algorithm outputs an element $C\in \A$
which has rank 1 in $M_n(\Q)$. Then the left action of $\A$ on $\A C$
provides an $\A \rightarrow M_n(\Q)$ isomorphism.  
The major steps of the algorithm are the following. 

\hrulefill

\begin{enumerate}

\item 
Use the Ivanyos-R\'onyai algorithm \cite{IR} to construct a maximal order $\Lambda $
in ${\cal A}$. This is a polynomial time ff-algorithm\footnote{It performs
well if the integers to be factored are not very big. The method has been 
implemented in Magma by de Graaf.}.

\item
Compute an embedding of ${\cal A}$ into $M_n(\R)$. One uses here
the the deterministic polynomial time algorithm obtained via the
derandomization by 
de Graaf and Ivanyos \cite{GI} of the  
Las Vegas algorithm of Eberly 
\cite{Eb3}. 
This way we have a Frobenius norm on  ${\cal A}$. For $X\in {\cal A}$ 
we can set $\|X\|=\sqrt{Tr(X^T X)}$. Also, via this embedding $\Lambda $ can
be viewed as a full lattice in $\R^m$, where $m=n^2$. The length $|\ve v|$ of 
a lattice vector $\ve v$ is just the Frobenius norm of $\ve v$ as  a matrix.

\item 
Compute a rational approximation $A$ of our basis $B$ of $\Lambda$ with
precision $q_0(B,\frac 12, 2^{\frac{m-1}{2}})$ (see Section 2 in \cite{Bu1}
for the definition of the precision parameter $q_0$).  One can use here the
Algorithm of Sch\"onhage\footnote{For a more recent method see 
\cite{Pa1}.}  \cite{Sch1}.

\item 
Compute a reduced basis $\ve b_1,\ldots ,\ve b_m$ of the lattice
$\Lambda\subset \R^m$
by applying the LLL algorithm to $A$. For $c_m$ we have the value 
from (\ref{red}).

\item
If some of the basis elements $\ve b_i$ is a zero divisor in ${\cal A}$,
then there are two cases. If $\rank \, \ve b_i=1$, 
then we are done and stop with the output $C:=\ve b_i$. 
Otherwise, if $1<\rank \, \ve b_i< n$, then  we compute the the right 
identity element $e$ of the 
left ideal $\A \ve b_i$ by solving the straightforward system of 
linear equations, 
set $\A :=e\A e$ and go back to Step 1. 

\item
At this point we know that $|\ve b_i |\geq \sqrt n$ holds for every $i$.
Generate all integral linear combinations $C'=\sum _{i=1}^m\gamma_i\ve b_i$, where the
$\gamma_i$ are integers, $|\gamma_i|\leq 
c_m \frac{n}{|\ve b_i|}\leq c_m \sqrt{n}$
until a $C$ 
is found with $\rank \, C=1$. Output this $C$.

\end{enumerate}

\hrulefill

%-------------------------------------------------------------------
\begin{proof}[Proof of theorem \ref{main} for $\K=\Q$.] As for the correctness of the
algorithm, let $\ve b_1,\ldots ,\ve b_m$ the basis of $\Lambda$ obtained 
at Step 4 with $\| \ve b_1\|\leq \cdots \leq \|\ve b_m\|$. Then by Corollary
4 from \cite{Bu1} we have 
$$\|\ve b_i\|\leq  \frac 32 \cdot
2^{\frac{m-1}{2}}\lambda_i \text{~~for~} i=1,\ldots ,m,$$
where $\lambda_i$ is the $i$-th successive minimum of $\Lambda$. 
From this we infer
$$\| \ve b_1\|\|\ve b_2\| \cdots \|\ve b_m\|\leq 
\left( \frac 32 \right)^m
2^{\frac{m(m-1)}{2}}\lambda_1\cdots \lambda_m\leq 
\left(\gamma_m \right)^{\frac m2}\left( \frac 32 \right)^m
2^{\frac{m(m-1)}{2}}\det (\Lambda ),$$
as claimed. At the last inequality we used Minkowski's inequality on 
successive minima (see
Chapter VIII in \cite{Ca1}).

We remark also that, if at Step 5 we have $\rank \, e= k$, then 
it is easy to see that $e\A e\cong M_k(\Q)$.  Moreover, a rank one element 
of $e\A e$ will have rank one in $\A$ as well.   At Step 6 the $\ve b_i $
are nonsingular matrices, hence   $\|\ve b_i \|\geq \sqrt n$ holds by 
Lemma \ref{singular}. Finally, Theorem \ref{rankone} and 
Lemma \ref{coefficients} (this is applied for $\ve v :=C$ and 
$|\ve v|\leq n$) show that an element 
$C$ with rank one exists among the linear combinations enumerated.

Considering the timing of the algorithm, Step 1 runs in polynomial time as
an ff-algorithm. Steps 2, 4 and 5 can be done in deterministic polynomial 
time. At Step 3 the precision paramater $q_0$ is polynomial in the input
size, hence Sch\"onhage's approximation algorithm 
(see also Section 3 of \cite{KLL}) runs in polynomial time.

The number of 
jumps back to Step 1 
is also bounded, hence each Step is carried out in a bounded number of times. 
Finally, the number of elements $C'$ enumerated at Step 6 is at most 
$(2 c_m \sqrt{n}+1)^m$, this is also bounded by our assumption.
\end{proof}

\medskip

\noindent
{\bf Remarks.} 1. In Step 4 of the preceding algorithm one may also consider 
the idempotent $f=I-e$, where $I$ is the identity element of $\A$. If $\rank
f=1$, then we can stop with $C:=f$. Otherwise, if $\rank f < \rank e$, then 
we may work with $f\A f$ instead of $e\A e$. \\
2. We could avoid jumps back to Step 1 if we had a good lower bound on the 
quantities $\|\ve b_i\|$. Unfortunately, we do not have such a bound in 
general. The difficulty here 
may come from the fact, that the closure of the similarity-orbit of
nilpotent matrices contains the zero matrix. This is illustrated by 
the matrices
$$ X=\left( \begin{array}{cc}t & 0 \\
                             0 & \frac 1t
             \end{array} \right),  ~~~ E=
\left( \begin{array}{cc}0 & 1 \\
                             0 & 0
             \end{array} \right). $$
We have $ XEX^{-1}= t^2E$, hence $\|XEX^{-1}\|$
gets arbitrarily small as $t\rightarrow 0$. \\
3. We could have used Lemma \ref{nilpotent} instead of Lemma \ref{singular}. 
In this case we test in Step 4 if there is a nilpotent element among the  
$\ve b_i$.  Also, then in Step 5 we have to enumerate integral linear 
combinations $\sum _{i=1}^m\gamma_i\ve b_i$
with   $|\gamma_i|\leq c_m \cdot n$.

\section{The general case}

Let $\K$ be a number field of degree $d$ over $\Q$, the maximal order 
of $\K$ is denoted by $R$ and the positive discriminant of $R$ is $\Delta$.
Let $\A$ be a central simple algebra over $\K$ such that 
$\Aalg\cong M_n(\K)$, and let $\Lambda$ be a maximal order in 
$\A$.

It is known (see Reiner \cite{Re}, Corollary 27.6) that
there is an isomorphism $\psi:\Aalg\rightarrow M_n(\K )$
such that the image of $\Lambda$ is
$$
\Lambda':=\psi(\Lambda)=\begin{pmatrix}
R & \cdots R & J^{-1} \\
\vdots & \ddots & \vdots  \\
R & \cdots R & J^{-1} \\
J & \cdots J & R 
\end{pmatrix},
$$
where $J$ is a fractional ideal of $R$ in $\K$. (The notation with a 
matrix having sets as entries refers to all matrices $(x_{ij})_{i,j=1}^n$ 
whose elements belong to the designated sets, for example, $x_{11}\in R$, 
$x_{n1}\in J$, etc.) 
Let $\sigma_1,\ldots,\sigma_r$ be the embeddings
of $\K$ into $\R$ and
$\sigma_{r+1},\overline{\sigma_{r+1}},
\ldots,\sigma_{r+s},\overline{\sigma_{r+s}}$ be 
the non-real embeddings of $\K$ into $\C$; here we have $d=r+2s$.

For each $1\leq i\leq r+s$ let us 
consider an 
embedding $\phi_i$ of $\Aalg$ into $M_n(\C)$, which extends
 $\sigma_i$
(for $i\leq r$ we require $\phi_i(\Aalg)\leq M_n(\R)$). We remark  
that such embeddings can actually be computed efficiently by the methods 
of \cite{Eb3} and \cite{GI}. For $x\in \A$ the matrices  $\phi_i(x)$ are 
in $M_n(\C)$, hence we may speak about the absolute value of their entries. 
Set 
$$
b=\left(\left(\frac{2}{\pi}\right)^{2sn}\Delta^{n}\right)^{\frac{1}{nd}}
=\left(\frac{2}{\pi}\right)^{\frac{2s}{d}}\Delta^{\frac{1}{d}}.$$

\begin{theorem} \label{rankone1}
There exists a rank one element $x\in \Lambda $ such that 
the entries of the matrices $\phi_i(x)$ for $i=1, \ldots ,s+r$ all have 
absolute value at most $b$. 
\end{theorem}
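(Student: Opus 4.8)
The plan is to imitate the argument of Theorem \ref{rankone}, replacing the single real embedding with the collection of archimedean places and replacing Minkowski's convex body theorem for $\Z$-lattices in $\R^n$ with its analogue for $R$-lattices, equivalently for $\Z$-lattices in the Minkowski space $\R^d\cong\prod_i\K_{\sigma_i}$. First I would fix the isomorphism $\psi:\A\to M_n(\K)$ of Reiner (Corollary 27.6) carrying $\Lambda$ to the standard form $\Lambda'$ displayed above, with $J$ a fractional ideal of $R$. As in the $\Q$-case I want to produce a rank one element as a product $C=C_1C_2$ where $C_1\in\Lambda'$ lies in the ``first column'' left ideal and $C_2\in\Lambda'$ lies in the ``first row'' right ideal; then $C_1C_2\in\Lambda'$ automatically has rank at most $1$, and one checks from the shapes that the product of a suitable nonzero column and a suitable nonzero row is nonzero, so $\rank C=1$. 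Pulling back by $\psi^{-1}$ gives an element of $\Lambda$; the task is to control the archimedean sizes $\phi_i(C)=\phi_i(\psi^{-1}(C_1))\phi_i(\psi^{-1}(C_2))$ via submultiplicativity of the entrywise/operator norms, so it suffices to make each factor small at every place.

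Next I would identify the left ideal of ``first columns'' of $\Lambda'$: it consists of column vectors with the first $n-1$ entries in $R$ and the last in $J$. Under the product of archimedean embeddings $\prod_{i=1}^{r}\sigma_i\times\prod_{i=r+1}^{r+s}\sigma_i$ this $R$-lattice of rank $n$ maps to a full $\Z$-lattice $L$ in the real vector space of dimension $nd$ obtained from $n$ copies of the Minkowski space of $\K$ (the last copy twisted by $J$). Its covolume is, up to the standard normalising constant $(2/\pi)^{s}$ per copy of $\K$ arising from the non-real places, equal to $\sqrt{\Delta}^{\,n}\cdot\mathrm{N}(J)$ times the covolume of the trivial lattice; choosing coordinates so that $Q:=\psi^{-1}$ composed with the embeddings is volume-normalised exactly as in Theorem \ref{rankone} (rescaling to make the relevant determinant $1$), Minkowski's theorem applied to the ball of radius $b$ (in the Frobenius norm aggregated over all places) produces a nonzero $C_1$ in this lattice whose image at every place has all entries of absolute value at most $b$. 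The value of $b$ is precisely what makes the ball volume exceed $2^{nd}$: the extra factors $(2/\pi)^{2sn}$ and $\Delta^{n}$ inside $b^{nd}$ are exactly the covolume contributions from the $s$ complex places and from the discriminant. The ``transposed'' argument on the first-row right ideal, whose $R$-lattice has covolume involving $\mathrm{N}(J)^{-1}$ (these norms cancel in the product), yields $C_2$ with the same bound.

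The main obstacle I expect is bookkeeping rather than a genuine difficulty: getting the normalisation constants exactly right so that the radius threshold comes out to be $b$ and not merely ``$b$ times an absolute constant''. Concretely one must (i) pass correctly between the Frobenius norm on $M_n(\C)$ and the sum-of-squares of absolute values of matrix entries, and keep track of the factor-of-two issues at complex places in Minkowski's lattice-point theorem; (ii) verify that the covolume of the column $R$-lattice in Minkowski space really is $\sqrt{\Delta}^{\,n}\mathrm{N}(J)$ up to the stated powers of $2/\pi$, using the fact that a fractional ideal $J$ spans a lattice of covolume $\mathrm{N}(J)\sqrt{\Delta}$ in the Minkowski space of $\K$ (cf.\ \cite{Jan}, Ch.\ I, \S13); and (iii) confirm that the $\mathrm{N}(J)$ and $\mathrm{N}(J)^{-1}$ from the two dual lattices cancel, so the final bound is independent of $J$. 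Once these are in place, submultiplicativity of the norm gives $\|\phi_i(C)\|\le\|\phi_i(\psi^{-1}(C_1))\|\cdot\|\phi_i(\psi^{-1}(C_2))\|$ at each place, and comparing entrywise bounds with norm bounds finishes the proof; I would also remark, as after Theorem \ref{rankone}, that using Hermite's constant in place of the crude cube-packing volume estimate improves $b$ for large $n$.
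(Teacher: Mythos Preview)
Your overall architecture---build a rank-one element of $\Lambda'$ as a column times a row, pull back by $\psi^{-1}$, and bound it at every archimedean place via Minkowski---matches the paper exactly. However, the step ``it suffices to make each factor $\phi_i(\psi^{-1}(C_j))$ small and use submultiplicativity'' does not go through as written, and this is more than bookkeeping. By Noether--Skolem there are matrices $A_i$ of determinant $\pm 1$ with $\phi_i\circ\psi^{-1}=A_i^{-1}\,\sigma_i(\cdot)\,A_i$. For a column element $C_1=\ve c\,e_1^T$ one gets
\[
\phi_i(\psi^{-1}(C_1))=\bigl(A_i^{-1}\sigma_i(\ve c)\bigr)\bigl(e_1^T A_i\bigr),
\qquad
\|\phi_i(\psi^{-1}(C_1))\|=\|A_i^{-1}\sigma_i(\ve c)\|\cdot\|e_1^T A_i\|,
\]
and the second factor $\|e_1^T A_i\|$ (the length of the first row of $A_i$) is not under your control: $\det A_i=\pm 1$ puts no bound on an individual row. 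The dual factor $\|A_i^{-1}e_1\|$ is likewise uncontrolled in $\phi_i(\psi^{-1}(C_2))$. So the submultiplicativity bound carries an extra $\|e_1^T A_i\|\cdot\|A_i^{-1}e_1\|$ at each place, which can be arbitrarily large; you will not reach the constant $b$ this way. Note that already in Theorem~\ref{rankone} the two short objects are $QB$ and $B'Q^{-1}$ (one-sided products), \emph{not} the conjugates $QBQ^{-1}$ and $QB'Q^{-1}$.

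The fix is exactly what the paper does: observe that in the product these rogue factors collapse, since $(e_1^T A_i)(A_i^{-1}e_1)=1$. Concretely,
\[
\phi_i\bigl(\psi^{-1}(\ve u^T\ve v)\bigr)=A_i^{-1}\sigma_i(\ve u)^T\sigma_i(\ve v)A_i
=\bigl(\sigma_i(\ve u)A_i'\bigr)^T\bigl(\sigma_i(\ve v)A_i\bigr),\qquad A_i'=(A_i^{-1})^T,
\]
so one should apply Minkowski not to the lattices $\{\phi_i(\psi^{-1}(C_j))\}$ but to the \emph{one-sided} twisted lattices $\{\sigma_i(\ve u)A_i'\}$ over $(R,\ldots,R,J)$ and $\{\sigma_i(\ve v)A_i\}$ over $(R,\ldots,R,J^{-1})$; both have the same covolume as the untwisted lattices because $\det A_i=\pm 1$. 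Two further small corrections: the Minkowski radius for the column (resp.\ row) lattice is $r(J)$ (resp.\ $r(J^{-1})$), not $b$; it is their product that equals $b$, which is how the $N(J)$ cancellation you mention actually occurs. And to obtain entrywise bounds on $\phi_i(x)$ the paper uses a product of intervals and discs rather than a Euclidean ball; your ball would only give a Frobenius-norm bound and hence a slightly worse entrywise constant.
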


\begin{proof}
Let $\psi_i:\Aalg\rightarrow M_n(\C)$ be the composition
of $\psi$ with the natural extension of $\sigma_i$ to
$M_n(\C)$. 
These maps are shown at the diagram below. The vertical map is the 
extension of $\sigma_i$ from $\K$ to $M_n(\K)$. The triangle is commutative.
$$ 
\begin{array}[c]{ccccc}
M_n(\C)&\xleftarrow{\phi_i} & \A & \xrightarrow{\psi} & M_n(\K) \\
&&& \stackrel{\psi_i}\searrow &\downarrow ^{\sigma_i} \\
&&&& M_n(\C)    
\end{array}
$$
Then the $\C$-linear extensions of
the composite maps $\phi_i\psi_i^{-1}$ from
$\psi_i(\Aalg)$ to $M_n(\C)$
are $\C$-algebra automorphisms of $M_n(\C)$
(whose restrictions, for $i=1,\ldots,r$,
to the real matrices are automorphisms of $M_n(\R)$).
As these automorphisms must be inner, 
there exist matrices $A_1,\ldots,A_r\in GL_n(\R)$ with determinant $\pm 1$ 
and $A_{r+1}, \ldots, A_{r+s}\in SL_n(\C)$ 
such that for $i=1,\ldots,r+s$ we have
$$\phi_i(\Lambda)=A_i^{-1}\Lambda ' A_i=
A_i^{-1}
\begin{pmatrix}
\sigma_i(R) & \cdots \sigma_i(R) & \sigma_i(J^{-1}) \\
\vdots & \ddots & \vdots  \\
\sigma_i(R) & \cdots \sigma_i(R) & \sigma_i(J^{-1}) \\
\sigma_i(J) & \cdots \sigma_i(J) & \sigma_i(R)
\end{pmatrix}A_i.
$$
Put $A'_i:=(A_i^{-1})^T$.
We show that there exist nonzero vectors
$\ve u\in (R,\ldots,R,J)\subset \K^n$ and $\ve v\in
(R,\ldots,R,J^{-1})\subset \K^n$
such that for every index $i=1,\ldots,r$,
all the coordinates of $\sigma_i(\ve u)A_i'$ and $\sigma_i(\ve v)A_i$ 
are of "small" absolute values. Then all the entries of the matrix 
$\phi_i\psi^{-1}(\ve u^T\ve v)$ will be small, demonstrating that there exist
a rank one element of $\Lambda$, namely  
$\psi^{-1}(\ve u^T\ve v)$, 
which is small in all the
embeddings $\phi_i$.

To this end,  we consider the set $\cal M$
of row vectors of length $nd$ of the form
\begin{equation} \label{embed}
(\sigma_1(\ve u),\ldots,\sigma_r(\ve u),
\sigma_{r+1}(\ve u),\overline{\sigma_{r+1}}(\ve u),
\ldots,
\sigma_{r+s}(\ve u),\overline{\sigma_{r+s}}(\ve u)),
\end{equation}
where
$\ve u\in (R,\ldots,R,J)$.
$\cal M$ is a lattice in the linear space $\C^{dn}$ whose rank is $nd$
because of the linear independence of field automorphisms, see Theorem I.3
in \cite{Jac}. The  
determinant of lattice ${\cal M}$ is 
$$\Delta^{n/2}N(J),$$
where  $N(J)$ is the norm of the fractional ideal $J$ (see Proposition 13.4,
Chapter I. in 
\cite{Jan}).
Next we consider the set $\cal M'$ of vectors
of the form
$$(\sigma_1(\ve u)A_1',\ldots,\sigma_r(\ve u)A_r',
\sigma_{r+1}(\ve u)A_{r+1}',\overline{\sigma_{r+1}}(\ve u)\overline{A_{r+1}'},
\ldots,
\sigma_{r+s}(\ve u)A_{r+1}',
\overline{\sigma_{r+s}}(\ve u)\overline{A_{r+s}'}).$$
This set is obtained by multiplying vectors from 
$\cal M$ by the block diagonal matrix 
$$diag\left(A_1',\ldots,A_r',A_{r+1}',\overline{A_{r+1}'},
\ldots,A_{r+1}',\overline{A_{r+s}'}\right).$$
Here each block has determinant $\pm 1$,
therefore the determinant of $\cal M'$ remains $\Delta^{n/2}N(J)$.

Finally we apply the block diagonal matrix
$$diag\left(I,\ldots,I,
\begin{pmatrix}
\frac{1}{2}I & \frac{-\iota}{2}I \\
\frac{1}{2}I & \frac{\iota}{2}I
\end{pmatrix},
\ldots,
\begin{pmatrix}
\frac{1}{2}I & \frac{-\iota}{2}I \\
\frac{1}{2}I & \frac{\iota}{2}I
\end{pmatrix}
\right),
$$ 
where $I$ stands for the $n$ by $n$ identity matrix, and we have 
$r$ blocks of $I$.
The determinant of this matrix is $(\iota /2)^{ns}$.
From ${\cal M'}$ we obtain the lattice $\cal L$ of rank $nd$ in 
$\R^{nd}\subset \C^{nd}$ 
consisting of the vectors
\begin{equation} \label{embed2}
(\sigma_1(\ve u)A_1',\ldots,\sigma_r(\ve u)A_r',
\Re(\sigma_{r+1}(\ve u)A_{r+1}'),
\Im(\sigma_{r+1}(\ve u)A_{r+1}'),
\ldots,
\Re(\sigma_{r+s}(\ve u)A_{r+1}'),
\Im(\sigma_{r+s}(\ve u)A_{r+1}'),
\end{equation}
where $\ve u$ runs over $(R,\ldots,R,J)\subset \K^n$.
The determinant of $\cal L$ is $2^{-sn}\Delta^{n/2}N(J)$.
We apply now Minkowski's theorem on convex bodies
to the lattice $\cal L$
and to the product of $rn$ one-dimensional balls
and $sn$ two-dimensional balls of radius
$$r(J)=\left(\left(\frac{2}{\pi}\right)^{sn}N(J)
\Delta^{n/2}\right)^{\frac{1}{nd}}.$$
This is a closed convex centrally symmetric (with respect to the origin) 
body of volume
$$ \left(2r(J)\right)^{rn}\left(\pi r(J)^2\right)^{sn}.$$
This volume is $2^{nd}\det {\cal L}$. The theorem 
tells us that there exists a nonzero 
$\ve u\in(R,\ldots,R,J)$ such that 
for every $1\leq i\leq r+s$,
all the coordinates
of $\sigma_i(\ve u)A_i'$ have absolute value
at most $r(J)$.

Similarly, there exists a nonzero vector
$\ve v\in(R,\ldots,R,J^{-1})$ such that
for every $1\leq i\leq r+s$,
all the coordinates of $\sigma_i(\ve v)A_i$ have absolute value
at most $r(J^{-1})$ where
$$r(J^{-1})=\left(\left(\frac{2}{\pi}\right)^{sn}N(J)^{-1}
\Delta^{n/2}\right)^{\frac{1}{nd}}.$$

Then $x=\psi^{-1}(\ve u^T\ve v)$ is a rank one element of $\Lambda$ such that
for every $i$, all the entries of the
matrix $\phi_i(x)$ have absolute value
at most 
$$
r(J)r(J^{-1})=\left(\left(\frac{2}{\pi}\right)^{2sn}
\Delta^{n}\right)^{\frac{1}{nd}}
û=\left(\frac{2}{\pi}\right)^{\frac{2s}{d}}\Delta^{\frac{1}{d}}=b.$$
\end{proof}

\medskip

We point out two interesting consequences: \\
1. If $\K =\Q$, $R= \Z$, then $\Delta=1$, $s=0$, hence $b=1$. 
We have an element $x$ of our maximal order $\Lambda$  which has rank 1 
as a matrix from $ M_n(\Q)$, and with respect to our selected embedding of 
$\Aalg$ into $M_n(\R)$ has elements of absolute value at most 1. This 
is essentially Theorem \ref{rankone}. \\
2. If $D$ is a positive squarefree integer, $\K =\Q(\sqrt{D})$, 
then $\Delta=D$, if $D$ is congruent to 1
modulo 4, and $\Delta=4D$, if $D$ is congruent to 3 modulo 4.
Then  $s=0$, $d=2$, hence $b\leq 2\sqrt{D}$.

To our algorithm we shall need a more general variant of Lemma \ref{singular}. 

\begin{lemma} \label{singular1} Let $y\in \Lambda $ be an element such that 
$\|\phi_i(y)\|< \sqrt{n}$ holds for $i=1,\ldots ,r+s$. Then $y$ is a zero
divisor in $\A$. 
\end{lemma}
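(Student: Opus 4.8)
The plan is to deduce the statement from Lemma \ref{singular}, applied simultaneously at all archimedean places of $\K$ by means of the reduced norm. Let $\psi\colon\A\to M_n(\K)$ be the isomorphism fixed above, and put $\nu:=\det\psi(y)\in\K$, the reduced norm of $y$. The first step is to observe that $\nu$ is an algebraic integer: since $y$ lies in the $R$-order $\Lambda$, it is integral over $R$, so its reduced characteristic polynomial has coefficients that are integral over $R$ and lie in $\K$, hence in $R$ because $R$ is integrally closed. In particular $\nu\in R$, so $N_{\K/\Q}(\nu)\in\Z$.

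Next I would relate the absolute values of the conjugates of $\nu$ to the quantities $\|\phi_i(y)\|$. For $1\le i\le r+s$, the map $\phi_i$ differs from $\psi_i=\sigma_i\circ\psi$ by an inner automorphism of $M_n(\C)$ (as was used in the proof of Theorem \ref{rankone1}), and inner automorphisms preserve determinants; hence $\det\phi_i(y)=\det\psi_i(y)=\sigma_i(\nu)$. The conjugate place contributes $\overline{\sigma_i}(\nu)=\overline{\det\phi_i(y)}$, which has the same modulus. Now the QR-plus-AM/GM estimate in the proof of Lemma \ref{singular} applies verbatim to each matrix $\phi_i(y)\in M_n(\C)$: writing $\phi_i(y)=QR$ with $Q$ unitary and $R$ upper triangular, one gets $|\det\phi_i(y)|^{2/n}\le\frac1n\|R\|^2=\frac1n\|\phi_i(y)\|^2<1$ by the hypothesis $\|\phi_i(y)\|<\sqrt n$. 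Consequently $|\sigma(\nu)|<1$ for every one of the $d$ archimedean embeddings $\sigma$ of $\K$ (the $r$ real ones and both halves of each of the $s$ complex pairs).

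Taking the product over all embeddings then gives $|N_{\K/\Q}(\nu)|=\prod_{\sigma}|\sigma(\nu)|<1$. Since $N_{\K/\Q}(\nu)$ is a rational integer, it must be $0$, so $\nu=0$; that is, $\psi(y)$ is a singular matrix in $M_n(\K)$, and therefore $y$ is a zero divisor in $\A$ (a non-invertible element of $M_n(\K)$ is always a zero divisor).

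I expect the only point that needs real care to be the integrality of the reduced norm on a maximal order, i.e. that $\nu\in R$; this is a standard fact about orders in central simple algebras (Reiner \cite{Re}) but deserves to be cited. Everything else is a routine transfer of the argument of Lemma \ref{singular} through the $d$ real and complex places, together with the bookkeeping needed to keep the inequalities strict.
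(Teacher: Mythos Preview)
Your argument is correct and follows essentially the same route as the paper: bound $|\det\phi_i(y)|<1$ via the QR/AM--GM estimate of Lemma~\ref{singular}, identify $\det\phi_i(y)$ with $\sigma_i$ of the reduced norm $n(y)$, use that $n(y)\in R$ (the paper cites Reiner, Theorem~10.1, where you argue integrality directly), and conclude from $|N_{\K/\Q}(n(y))|<1$ together with $N_{\K/\Q}(n(y))\in\Z$ that $n(y)=0$. The only cosmetic difference is that you spell out the inner-automorphism justification for $\det\phi_i(y)=\sigma_i(\nu)$ and the handling of the conjugate complex places, which the paper leaves implicit.
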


\begin{proof} As in Lemma \ref{singular} we obtain that 
\begin{equation} \label{determinant}
|\det \phi_i(y) |< 1 \text{~for~} i=1,\ldots ,r+s. 
\end{equation}

Note that $\det \phi_i(y)=\sigma _i(n(y))$, where $n(y)$ is the reduced 
norm of $y$ (see Section 9 in \cite{Re}). 
Inequality (\ref{determinant}) implies that 
$$  | \sigma_1(n(y)) \cdots  \sigma_r(n(y)) \sigma_{r+1}(n(y))  
\overline{\sigma_{r+1}(n(y))} \cdots 
\sigma_{r+s}(n(y))  \overline{\sigma_{r+s}(n(y))} | <1.$$
Moreover, by Theorem 10.1 from \cite{Re} $n(y)\in R$, 
therefore the number on the left is a rational integer, giving that  
$\det \phi_i(y)=0$
for at least one (and hence for all) $i$. This implies that $y$ is a zero
divisor in $\A$. 
\end{proof}

To be able to use lattice basis reduction techniques, we use a
transformation which turns a maximal order in $\A$ into a full lattice in a 
suitable real linear space. To this end for $y\in \Lambda $ we form the 
vectors 
$$\Phi(y):= (\phi_1(y),\ldots,\phi_r(y),
\Re(\phi_{r+1}(y)),
\Im(\phi_{r+1}(y)),
\ldots,
\Re(\phi_{r+s}(y)),
\Im(\phi_{r+s}(y))).
$$
As with (\ref{embed}) and (\ref{embed2}), we infer that 
$\Gamma:=\Phi(\Lambda)$ is a 
full lattice in  the real linear space in $\R^m$, with $m=n^2d$.

We give
now the algorithm of Theorem \ref{main} for the general case:
as input, we have an algebra ${\cal A}$ over $\K$, given to us
by structure constants. Suppose further, that ${\cal A}\cong M_n(\K)$. 
Our algorithm outputs an element $x\in \A$
which has rank 1 in $M_n(\K)$. 

\hrulefill

\begin{enumerate}

\item 
Use the Ivanyos-R\'onyai algorithm \cite{IR} to construct a maximal order
$\Lambda $
in ${\cal A}$. 

\item
Compute the embeddings $\phi_i$ of ${\cal A}$ into $M_n(\C)$ for
$i=1,\ldots, r+s$ (they are embeddings into $M_n(\R)$ for $i\leq r$)
by the deterministic  variant  \cite{GI} of Eberly's algorithm \cite{Eb3}.

\item
Form a basis of the full rank lattice 
$\Gamma\subset \R^m$ with $m=n^2d$. Note that for the 
Euclidean length in $\Gamma $ we have 
$$|\Phi(y)|^2=\sum_{i=1}^{r+s}\|\phi_i(y)\|^2. $$

\item 
Compute a reduced basis $\ve b_1,\ldots ,\ve b_m$ of the lattice
$\Gamma\subset \R^m$
by using Buchmann's approximate version the LLL algorithm to achieve 
the value in (\ref{red})
for the reducedness factor $c_m$.

\item
If an element $y=\Phi^{-1}(\ve b_i)$ is a zero divisor in ${\cal A}$,
then there are two cases. If $\rank \, y=1$, 
then we are done and stop with the output $x:=y$. 
Otherwise, if $1<\rank \, y< n$, then  we compute the the right
identity element $e$ of the 
left ideal $\A y$, 
set $\A :=e\A e$ and go back to Step 1.

\item
At this point we know that $|\ve b_i |\geq \sqrt n$ holds for every $i$.
Generate all linear combinations $\ve w=\sum _{i=1}^m\gamma_i\ve b_i$, where
the
$\gamma_i$ are rational integers with  
$$ |\gamma_i|\leq 
c_m \frac{bn\sqrt{r+s}}{|\ve b_i|}\leq c_m b \sqrt{n(r+s)}= 
c_m \left(\frac{2}{\pi}\right)^{\frac{2s}{d}}\Delta^{\frac{1}{d}}
\sqrt{n(r+s)}$$
until a $\ve w$ is found such that $\rank\, x=1$ holds for the $x\in \Lambda$
with $\Phi(x)=\ve w$. Output this $x$.

\end{enumerate}

\hrulefill

\begin{proof}[Proof of Theorem \ref{main}.]
The proof is essentially the same as in the simpler case $\K=\Q$. 
At Step 6  $\Phi^{-1}(\ve b_i) $
is necessarily a nonsingular element of $\Lambda$ for $i=1,\ldots, r+s$. 
By Lemma \ref{singular1} there must be a $j$ such that 
 $\|\phi_j(\Phi^{-1}( \ve b_i)) \|\geq \sqrt n$, giving that 
$|\ve b_i |\geq \sqrt n$.
Theorem \ref{rankone1} and    
Lemma \ref{coefficients},  the latter is applied with 
$|\ve v|\leq bn\sqrt{r+s}$, show that an element $\ve w$ 
with $\rank \, \Phi^{-1}(\ve w)=1$ 
exists among the linear combinations enumerated.

Here also each Step is carried out 
in a bounded number of times. 
The number of elements $\ve w$ enumerated at Step 6 is at most
$(2 c_m b\sqrt{n(r+s)}+1)^m$. This is also bounded by our assumptions.
\end{proof}

\section{Two consequences}

From the elementary theory of the Brauer group (see for example Section 12.5
from \cite{Pi}) we know that for two central simple algebras 
$\A$ and $\B$ of the same dimension $n^2$ over a field $\K$ we have 
$\A\cong \B$ if and only if 
\begin{equation} 
\label{iso}
\A\otimes_{\K} \B^{op}\cong M_{n^2}(\K).
\end{equation}
We outline next that, over an infinite $\K$,  how one can efficiently recover 
from an isomorphism (\ref{iso}) an isomorphism 
$\sigma: \A \rightarrow \B$. 

Having isomorphism (\ref{iso}) explicitly implies that we have 
in our hands a left $\A \otimes_{\K} \B^{op}$-
module $V$ of dimension $n^2$ over $\K$. 
Then $V$, as a left $\A$-module, is isomorphic
to the regular left $\A$-module because they have the same dimension over
$\K$.  There exists an element $v\in V$ such that the map 
$\phi_v:a\mapsto  av$ is 
a left $\A$-module isomorphism from $\A$ to $V$. The elements
$v$ of $V$ which do not generate $V$ as a left $\A$-module are
zeros of a certain polynomial on $V$ of degree $n^2$ (the determinant
of the linear map $a\mapsto av$). Similarly, the elements
$v$ of $V$ for which the map $\psi_v:b\mapsto vb$
is not a right $\B^{op}$-module isomorphism between $\B^{op}$ and $V$
are zeros of a polynomial on $V$ of degree $n^2$. Therefore,
by the Schwartz-Zippel Lemma there exists an 
element $v\in V$ for which the maps $\phi_v$ and $\psi_v$
are simultaneously left and right isomorphisms, respectively.
The methods of \cite{BR} or \cite{CIK97} for finding large 
cyclic submodules can be used to obtain first a left
$\A$-module generator $V$ and then essentially the same
method can be applied to gradually transform $v$ to a generator
of $V$ as a right $\B^{op}$-module while preserving  the property that
$v$ is a left $\A$-module generator for $V$. For example, the the method 
of Lemma~8 from~\cite{CIK97} can be used here. We recall the statement of 
the lemma for the reader's convenience.

\begin{lemma}
Let $V$ be an $r$-dimensional  module
over the semisimple $\K$-algebra $\A$ and
$v_1,\ldots,v_r$ be a $\K$-basis of $V$. 
Assume that $v\in V$ is an element of non-maximal rank.
Let $\Omega$ be a subset of $\K^*$ consisting of 
at least $\mbox{\rm rk}\, v+1$ elements. 
Then there exists a scalar $\omega\in\Omega$
and a basis element $u\in\{v_1,\ldots,v_r\}$
such that $\mbox{\rm rk}(v+\omega u)>\mbox{\rm rk}\, v$.
(Here the rank $\mbox{\rm rk}\, v$ of $v$ is defined
as the dimension of the $\A$-submodule of $V$
generated by $v$.)
\end{lemma}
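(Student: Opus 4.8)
The statement is an averaging/pigeonhole argument over a module element of non-maximal rank. The key observation is that for a fixed $v$ and a fixed basis element $u$, the rank function $\omega \mapsto \mathrm{rk}(v + \omega u)$ is ``monotone generic'': the submodule generated by $v + \omega u$ contains, for all but finitely many $\omega$, the submodule generated by any particular combination it attains, so its dimension is constant outside a small exceptional set and at least as large as $\mathrm{rk}\,v$. The goal is to show that for at least one choice of $u$ among the basis vectors, this generic value strictly exceeds $\mathrm{rk}\,v$; then choosing $\omega \in \Omega$ outside the (small) exceptional set for that $u$ finishes the proof, which is why $|\Omega| \ge \mathrm{rk}\,v + 1$ suffices.

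First I would set up the genericity statement precisely. Fix $u$ and consider the $\A$-submodule $W_\omega = \A(v + \omega u) \le V$. One shows $\dim_\K W_\omega$ takes a constant value $r_u$ for all $\omega$ outside a finite set $S_u \subset \K$ with $|S_u| \le \dim_\K W_\omega \le r$: indeed pick $\omega_0$ attaining the maximum dimension, note $W_{\omega_0}$ is spanned by finitely many elements $a_j(v+\omega_0 u)$, and for a variable $\omega$ the corresponding vectors $a_j(v+\omega u)$ have coordinate matrix whose maximal minors are polynomials in $\omega$ of degree bounded by the number of rows; these minors are not all identically zero (they are nonzero at $\omega_0$), so they vanish only on a set of size at most that bound. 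On the complement $W_\omega$ has the full dimension $r_u = \dim_\K W_{\omega_0}$. Moreover taking $\omega = 0$ in this argument (or rather observing $v \in W_\omega + \A u$ type containments) shows $r_u \ge \mathrm{rk}\,v$ for every $u$.

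Next comes the core step: I claim $r_u > \mathrm{rk}\,v$ for some basis element $u$. Suppose not, so $\mathrm{rk}(v + \omega u) = \mathrm{rk}\,v$ generically in $\omega$ for every $u \in \{v_1,\dots,v_r\}$. Since $v$ has non-maximal rank, $\A v \subsetneq V$, so some basis element $v_k$ satisfies $v_k \notin \A v$. Pick $\omega \ne 0$ in $\K$ avoiding the exceptional set $S_{v_k}$ and also such that $-\omega^{-1}$ avoids the exceptional set for the symmetric role; then $\A(v + \omega v_k)$ has dimension $\mathrm{rk}\,v$ by assumption, yet it contains $v + \omega v_k$, and one derives a contradiction by showing $\A v + \A v_k \subseteq \A(v + \omega v_k)$ for suitable generic $\omega$ — for instance because $\A(v+\omega v_k)$ also generically equals $\A(v + \omega' v_k)$ for a second value $\omega'$, and from two distinct such combinations one recovers both $v$ and $v_k$ by linear algebra over $\K$, forcing $\A v_k \subseteq \A(v+\omega v_k)$ and hence $\mathrm{rk}(v+\omega v_k) > \mathrm{rk}\,v$. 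Finally, with $u$ fixed so that $r_u > \mathrm{rk}\,v$, since $|S_u| \le r \le \mathrm{rk}\,v + 1 - 1 < |\Omega|$ wait more carefully $|S_u|$ could be as large as $\dim W_{\omega_0} \le \mathrm{rk}\,v + 1$; I would instead bound $|S_u| \le \mathrm{rk}\,v$ by noting the minor polynomials have degree at most the number of generators needed, which can be taken to be $\mathrm{rk}\,v$, so some $\omega \in \Omega$ lies outside $S_u$ and yields $\mathrm{rk}(v+\omega u) = r_u > \mathrm{rk}\,v$.

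**Main obstacle.** The delicate point is the clean bound $|S_u| \le \mathrm{rk}\,v$ on the exceptional set, matched against $|\Omega| \ge \mathrm{rk}\,v + 1$; one must present the generic submodule $W_{\omega_0}$ using exactly $\mathrm{rk}\,v$ (or fewer) algebra-generators applied to $v + \omega_0 u$ so that the relevant determinants have degree at most $\mathrm{rk}\,v$ in $\omega$ — otherwise the counting is off by one. A convenient way to arrange this is to choose a $\K$-basis of $W_{\omega_0}$ of the form $a_1(v+\omega_0 u), \dots, a_\ell(v+\omega_0 u)$ with $\ell = \dim W_{\omega_0}$, observe that $\ell \le \mathrm{rk}\,v + 1$ can be sharpened: if $\dim W_{\omega_0} = \mathrm{rk}\,v$ we are in the ``bad'' case and proceed as above, while if $\dim W_{\omega_0} \ge \mathrm{rk}\,v + 1$ then the argument actually wants $\omega$ avoiding a set of size $\mathrm{rk}\,v$ — because the first $\mathrm{rk}\,v$ of those generators already generate a space of dimension $\mathrm{rk}\,v$ at $\omega_0$, and their spanning at a general $\omega$ already gives $\mathrm{rk}(v+\omega u) \ge \mathrm{rk}\,v$, with a single further generator pushing past; the vanishing locus for ``$v + \omega u$ generates strictly more than $\mathrm{rk}\,v$'' is then cut out by determinants of size $(\mathrm{rk}\,v + 1) \times (\mathrm{rk}\,v + 1)$ with $\omega$-degree $\le \mathrm{rk}\,v$ in the one ``new'' column. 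I would write this linear-algebra bookkeeping out carefully, as it is the only place the precise hypothesis $|\Omega| \ge \mathrm{rk}\,v + 1$ is used.
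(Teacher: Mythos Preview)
The paper does not actually prove this lemma; it only quotes the statement from \cite{CIK97} for the reader's convenience. So there is nothing in the paper to compare against, and I evaluate your argument on its own.

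Your overall plan is the right one, and the exceptional--set count can be made clean with one observation you nearly reach: writing $M(w)$ for the matrix whose rows are $a_j w$ in coordinates (for a fixed $\K$-basis $a_1,\dots,a_N$ of $\A$), each $(\mathrm{rk}\,v+1)\times(\mathrm{rk}\,v+1)$ minor of $M(v)+\omega M(u)$ is a polynomial in $\omega$ of degree at most $\mathrm{rk}\,v+1$ which \emph{vanishes at} $\omega=0$ (because $M(v)$ has rank exactly $\mathrm{rk}\,v$). Hence it is divisible by $\omega$ and has at most $\mathrm{rk}\,v$ roots in $\K^*$. This matches $|\Omega|\ge\mathrm{rk}\,v+1$ directly, without the column--by--column bookkeeping in your last paragraph.

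The genuine gap is in your ``core step''. From ``$v$ has non--maximal rank'' you use only that $\A v\subsetneq V$ and then pick $v_k\notin\A v$. But $\A v\subsetneq V$ is strictly weaker: with $\A=\K$ and $V=\K^2$ every nonzero $v$ satisfies $\A v\subsetneq V$, yet $v$ already has \emph{maximal} rank $1$ and $\mathrm{rk}(v+\omega u)=1$ for all $\omega,u$. Your contradiction then asserts that $\A(v+\omega v_k)$ ``generically equals'' $\A(v+\omega' v_k)$ for two distinct $\omega,\omega'$, so the common submodule would contain both generators and hence $v_k$. You have only established that these cyclic submodules generically have the same \emph{dimension}; there is no reason for them to coincide as subspaces of $V$, and in general they do not. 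So the argument as written does not prove $r_u>\mathrm{rk}\,v$ for any $u$.

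This is precisely where the semisimplicity hypothesis is needed, and you never invoke it. One clean repair is to reduce to a simple factor $\A\cong M_m(D)$ acting on $V\cong S^k$, where $\mathrm{rk}\,w$ is a fixed multiple of the $D$--matrix rank of $w$; then ``non--maximal rank'' means the matrix rank $s$ of $v$ satisfies $s<\min(m,k)$, the determinantal locus $\{\text{rank}\le s\}$ has a \emph{proper} tangent space at $v$, and since $v_1,\dots,v_r$ span $V$ some $v_i$ lies off that tangent space, forcing some $(s+1)$--minor of $M(v)+\omega M(v_i)$ to be a nonzero polynomial in $\omega$. Alternatively, split $V=\A v\oplus W$ using semisimplicity and track both projections. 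Either route uses semisimplicity in an essential way that your current draft does not.
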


We claim that if $v\in V$ is an element such that $\phi_v$ and $\psi_v$
are simultaneously isomorphisms of the respective modules, 
then $\sigma=\psi_v^{-1}\phi_v$ is an
algebra isomorphism between $\A$ and $\B$. It is obvious that
$\sigma$ is a $\K$-linear isomorphism between $\A$ and $\B$. Note that
for $a\in \A$, $\sigma a$ is the unique element $b\in \B$ with
the property $av=vb$. Therefore $\sigma(a_1a_2)$ is the unique
element of $\B$ with $a_1a_2v=vb$. But $a_1a_2v=a_1v(\sigma a_2)=
v(\sigma a_1)(\sigma a_2)$, whence $\sigma(a_1a_2)=(\sigma a_1)(\sigma
a_2)$.

Combining this argument with the algorithm of 
Theorem \ref{main}
for constructing a suitable module $V$, we obtain the following: 

\begin{corollary} \label{application}
Let $\K$ be an algebraic number field of degree $d$ and discriminant $\Delta$
over $\Q$. Let 
$\A, \B$ be central simple algebras over $\K$ of the same dimension $n^2$ 
given by structure constants.
Suppose that $d$, $n$ and $|\Delta|$ are bounded. If
$\A$ and $\B$ are isomorphic, then
an isomorphism
$\A \rightarrow \B$ can be constructed by a polynomial time ff-algorithm.
\end{corollary}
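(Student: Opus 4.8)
The plan is to reduce the construction to Theorem~\ref{main} applied to a tensor product algebra and then to extract the isomorphism from the resulting module, making the discussion that precedes the statement into an actual algorithm. First I would form $\mathcal{C}:=\A\otimes_{\K}\B^{op}$, which is central simple over $\K$ of dimension $n^4$ and whose structure constants are computed in polynomial time from those of $\A$ and $\B$. Since $\A\cong\B$, the Brauer group identity (\ref{iso}) gives $\mathcal{C}\cong M_{n^2}(\K)$, and the matrix size $n^2$, the degree $d$, and $|\Delta|$ are all bounded. Hence Theorem~\ref{main} constructs, by a polynomial time ff-algorithm, an isomorphism $\mathcal{C}\to M_{n^2}(\K)$, equivalently an irreducible left $\mathcal{C}$-module $V$ with $\dim_{\K}V=n^2$, presented by the matrices giving the action of a structure-constant basis of $\mathcal{C}$.

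Next I would exploit the two one-sided module structures carried by $V$. Restriction of scalars makes $V$ a left $\A$-module and, through $\B^{op}$, a right $\B$-module; since $\A$ and $\B$ are simple, $V$ is faithful on each side, and matching $\K$-dimensions ($\dim_{\K}V=n^2=\dim_{\K}\A=\dim_{\K}\B$) with the decomposition of $\A$ (resp.\ $\B$) into simple one-sided ideals forces $V\cong\A$ as a left $\A$-module and $V\cong\B$ as a right $\B$-module. Consequently the $v\in V$ for which $\phi_v\colon a\mapsto av$ fails to be an $\A$-module isomorphism $\A\to V$ form the zero locus of a polynomial of degree $n^2$ on $V\cong\K^{n^2}$, and likewise for $\psi_v\colon b\mapsto vb$ and $\B^{op}\to V$; as $\K$ is infinite, some $v$ avoids both (Schwartz--Zippel). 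To produce such a $v$ effectively I would first compute a generator of $V$ as a left $\A$-module by the cyclic-submodule method of \cite{BR} or \cite{CIK97}, and then iterate the Lemma of \cite{CIK97} recalled above: while the current $v$ generates $V$ over $\A$ but not over $\B^{op}$, replacing $v$ by $v+\omega u$ for a suitable scalar $\omega$ from an $(n^2+1)$-element subset of $\K^{*}$ and a suitable basis vector $u$ strictly raises the $\B^{op}$-module rank, and by taking the scalar set a little larger than the Lemma alone demands one keeps $v$ off the degree-$n^2$ hypersurface of non-generators for $\A$; after at most $n^2$ steps $v$ generates $V$ on both sides, using only a bounded number of elements of $\K^{*}$ and of rank computations.

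Finally, with such a $v$ I would set $\sigma:=\psi_v^{-1}\phi_v$. By construction $\sigma$ is a $\K$-linear bijection $\A\to\B$, and $\sigma(a)$ is the unique $b\in\B$ with $av=vb$; since the left $\A$-action and the right $\B$-action on $V$ commute, $a_1a_2v=a_1(v\sigma(a_2))=(a_1v)\sigma(a_2)=v\sigma(a_1)\sigma(a_2)$, so $\sigma(a_1a_2)=\sigma(a_1)\sigma(a_2)$ and $\sigma$ is an algebra isomorphism. Each step — the passage to $\mathcal{C}$, the call to Theorem~\ref{main}, the radical and semisimple-module computations behind the generator search, and the concluding linear algebra — is a polynomial time ff-algorithm under the boundedness hypotheses, which gives the corollary. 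The whole statement is thus essentially a formal consequence of Theorem~\ref{main}; the one point that genuinely needs attention is the effective construction of the simultaneous generator $v$, namely verifying that the corrections supplied by the cited Lemma can always be chosen to improve the $\B^{op}$-module rank without destroying the already-secured property of generating $V$ over $\A$ — which is exactly why one works with a slightly enlarged scalar set — while everything else follows directly from Theorem~\ref{main} and standard constructive module theory.
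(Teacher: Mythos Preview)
Your proposal is correct and follows essentially the same approach as the paper: form $\A\otimes_{\K}\B^{op}$, apply Theorem~\ref{main} to obtain the $n^2$-dimensional module $V$, use the cyclic-submodule method of \cite{BR}/\cite{CIK97} together with the quoted Lemma to produce a simultaneous left-$\A$/right-$\B^{op}$ generator $v$, and set $\sigma=\psi_v^{-1}\phi_v$. Your explicit remark about enlarging the scalar set so that the $\B^{op}$-rank improvement never destroys the $\A$-generation property is precisely the point the paper leaves implicit when it says the transformation is done ``while preserving'' that property.
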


\medskip

The next statement is quite modest. It formulates a very plausible claim,
but, to the best of our knowledge, it was not proven before.

\begin{corollary} \label{smallzerodiv}
Let $\K$ be an algebraic number field and 
$\A$ be an associative algebra over $\K$ given by structure constants such
that $\A\cong M_n(\K)$ holds for some integer $n>1$. Then there exists 
a zero divisor $x\in \A$ which admits polynomially bounded coordinates 
with respect to the input basis of $\A$. Moreover, such a zero divisor 
$x$ can be obtained by a polynomial space bounded computation. 
\end{corollary}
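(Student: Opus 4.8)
The plan is to run the algorithm of Theorem \ref{main} (in the general case), but to bypass the expensive ff-oracle calls entirely by observing that we only need to \emph{produce} a small zero divisor, not to certify that $\A\cong M_n(\K)$ or to carry out the full splitting. Concretely, I would first invoke the polynomial-time algorithm of \cite{IR} to construct a maximal order $\Lambda\leq\A$; note that this step, while an ff-algorithm, already runs in polynomial \emph{space}, since integer factorization is in $\mathrm{FP}^{\mathrm{PSPACE}}$ (indeed in $\mathrm{P}^{\#\mathrm{P}}$), so we may simulate the oracle within the stated space bound. Next, using \cite{GI} (the derandomized version of \cite{Eb3}), compute the embeddings $\phi_i$ of $\A$ into $M_n(\C)$ for $i=1,\dots,r+s$, deterministically and in polynomial time, and form a basis of the full lattice $\Gamma=\Phi(\Lambda)\subset\R^m$ with $m=n^2d$, exactly as in Step 3 of the general algorithm. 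Apply Buchmann's approximate LLL to get a reduced basis $\ve b_1,\dots,\ve b_m$ with the reducedness factor $c_m$ of (\ref{red}).

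Now I would separate two cases according to whether some $\ve b_i$ is already a zero divisor. If $\Phi^{-1}(\ve b_i)$ is a zero divisor for some $i$, we are done: its coordinates with respect to the input basis are polynomially bounded because $\ve b_i$ has length $O(c_m\cdot(\det\Gamma)^{1/m})$ by Minkowski and Corollary~4 of \cite{Bu1}, and $\det\Gamma$, the lattice determinant, has polynomially bounded bit-size in the input (it is $2^{-sn}\Delta^{n/2}N(J)$ up to the distortion introduced by the $A_i$, and the latter has determinant $\pm1$), so the change-of-basis back to the input basis of $\A$ only inflates the bit-size polynomially. If no $\ve b_i$ is a zero divisor, then by Lemma \ref{singular1} every $\ve b_i$ satisfies $|\ve b_i|\geq\sqrt n$, and Theorem \ref{rankone1} together with Lemma \ref{coefficients} guarantees a rank-one (hence zero-divisor, since $n>1$) element $x\in\Lambda$ among the integral combinations $\sum_i\gamma_i\ve b_i$ with $|\gamma_i|\leq c_m\,b\sqrt{n(r+s)}$. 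Each such $x$ has polynomially bounded coordinates in the input basis for the same reason as above. To find one without enumerating the whole box (which may be exponentially large when $n,d,|\Delta|$ are unbounded), iterate over the candidates $\ve w=\sum_i\gamma_i\ve b_i$ one at a time, reusing the same workspace for each — this is a polynomial-space computation, since a single candidate, its preimage $x\in\Lambda$, and the rank test (Gaussian elimination on the $n\times n$ matrix $\psi(x)$ over $\K$, or equivalently a kernel computation for left multiplication by $x$ on $\A$) all fit in polynomial space.

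The main obstacle I anticipate is the bookkeeping around bit-sizes: one must check that the lattice $\Gamma$ and its reduced basis, the required precision $q_0$ for the approximate LLL, and the transition matrices between $\Phi(\Lambda)$, the maximal order $\Lambda$, and the original structure-constant basis of $\A$ all have bit-length polynomial in the input size — \emph{including} the contribution of $\log|\Delta|$, $n$ and $d$, which are no longer assumed bounded here. The embeddings $\phi_i$ are only approximate, so $x$ is recovered from $\ve w$ not by inverting $\phi_i$ numerically but by tracking $\Phi$ as an exact $\Q$-linear (indeed $R$-linear) map on $\Lambda$ composed with rounding; the point is that $\Phi$ restricted to $\Lambda$ is injective with a controlled inverse, so a lattice vector determines its preimage exactly. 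Once these estimates are in place, the space bound follows from the fact that every intermediate object is of polynomial size and the only "search" — Step 6 — is a sequential scan reusing constant workspace per candidate, with the loop counter $\gamma=(\gamma_1,\dots,\gamma_m)$ itself occupying only polynomially many bits.
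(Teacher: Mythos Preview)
Your approach is essentially the same as the paper's: run Steps 1--5 of the general algorithm (stopping at Step~5 as soon as a zero divisor appears rather than recursing into $e\A e$), and if none is found, scan the box of Step~6 sequentially, reusing workspace, in polynomial space. One minor correction: the bound $|\ve b_i|=O\bigl(c_m(\det\Gamma)^{1/m}\bigr)$ you invoke for the reduced basis vectors holds only for $\ve b_1$, not for arbitrary $i$ (the later successive minima can be much larger than $(\det\Gamma)^{1/m}$); the paper sidesteps this by observing simply that the $\ve b_i$, being outputs of the polynomial-time ff-computation of Steps 1--4, have polynomial bit-size, and that the coefficients $\gamma_i$ in Step~6 are bounded by $c_m b\sqrt{n(r+s)}$, which also has polynomial bit-length in the input.
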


\begin{proof} A slight modification of the algorithm of Therorem \ref{main}
will provide a reasonably small zero divisor: at Step 5 we stop if $y$ is a zero
divisor. Note that $y$ has polynomial size as Steps 1-5 constitute a polynomial
time ff-alghorithm. If no zero divisor is found at Step 5, then we proceed 
directly to Step 6. The integral linear combinations considered there have size 
polynomial in the input length, and their enumeration can be carried out
using polynomial space only.
\end{proof}

\noindent
{\bf Remark.} A more direct, but perhaps algorithmically less efficient 
proof of Corollary \ref{smallzerodiv} is possible. Let 
$\ve c_1, \ldots , \ve c_{n^2}$ be the basis of $\Lambda $ given by the Ivanyos
R\'onyai algorithm. Express the element $x$ of Theorem \ref{rankone1} in
this basis: 
$$ x=\alpha _1 \ve c _1 +\alpha_2 \ve c _2+\cdots + \alpha _{n^2}\ve c _{n^2},
$$
with $\alpha_i\in \Z$. Using that $\|x\|\leq bn$, and that the vectors 
$\ve c _i$ have polynomial size, Cramer's rule implies a 
polynomial bound on the size of the coefficients $\alpha _i$.  

\medskip

By the well known connection between split cyclic algebras and
relative norm equations 
(see Theorem 30.4 in Reiner \cite{Re}), our results imply 
that for
a number field $\K$ and a cyclic extension $\mathbb L$ of $\K$ 
if a norm equation $N_{\mathbb L /\K }(x)=a$ is solvable, then there
is a solution whose standard representation has polynomial size
(in terms of the size of the standard 
representation of $a$ and a basis of $\mathbb L$). Furthermore,
for fixed $\K$ and fixed degree $|\mathbb L : \K|$, a solution
can be found by a polynomial time ff-algorithm.

We have given here a polynomial time ff-algorithm for the explicit 
isomorphism problem for central simple algebras $\A$ of fixed dimension 
over a 
fixed number field $\K$. Potential directions to extend this result 
may be allowing the dimension of the algebra over $\K$ to grow or allowing 
$\K$ to vary (even if its degree over $\Q$ remains fixed), or both. 
Existence of ff-algorithms for finding an explict isomorphism of a 
non-split central simple algebra
with the algebra of matrices over a skewfield is also left open
(even in the case of fixed base field, or fixed dimension). 
It would be interesting also to develop practical variants and programs for 
the algorithms presented here.


\begin{thebibliography}{MM}

\bibitem{Ac1}
V.~Acciaro,
Solvability of norm equations over cyclic number fields of prime degree,
{\em Math. Comp.} 65(1996), 1663-1674.


\bibitem{BR} L. Babai, L. R\'onyai, Computing irreducible
representations
of finite groups; {\it Mathematics of Computation,
192(1990), 705-722. }

\bibitem{Bre1}
M. R. Bremner, 
How to compute the Wedderburn decomposition of a finite-dimensional
associative algebra, manuscript, 2010, http://arxiv.org/pdf/1008.2006

\bibitem{Bu1} J. Buchmann, Reducing lattice bases by means of 
approximations, in: {\em Algorithmic number theory}, LNCS 877,
Springer-Verlag, 1994, 160--168.


\bibitem{Ca1} J. W. S. Cassels, {\em An introduction to the geometry of 
numbers}, Springer-Verlag, 1971.

\bibitem{CIK97}
{A. Chistov, G. Ivanyos, and M. Karpinski},
Polynomial time algorithms for modules over finite dimensional
algebras,
in {\em Proceedings of the 1997 International Symposium on Symbolic and Algebraic
Comput},
ACM, New York, 1997, pp. 68--74.


 
\bibitem{CIW}
A. M. Cohen, G. Ivanyos and D. B. Wales, Finding the radical of an algebra
of linear transformations, {\em J. Pure Appl. Algebra}, 117/118(1997) 177-193.


\bibitem{CFNSS1}
J. Cremona, T. Fisher, C. O'Neil, D. Simon, M. Stoll,
Explicit $n$-descent on elliptic curves I. Algebra,
{\em Journal f\"ur die reine und angewandte Mathematik}, 615(2008), 
121--155. 

\bibitem{CFNSS2}
J. Cremona, T. Fisher, C. O'Neil, D. Simon, M. Stoll,
Explicit $n$-descent on elliptic curves II. Geometry, 
{\em Journal f\"ur die reine und angewandte Mathematik}, 632(2009), 63--84.

\bibitem{CFNSS3}
J. Cremona, T. Fisher, C. O'Neil, D. Simon, M. Stoll,
Explicit $n$-descent on elliptic curves III. Algorithms,
ArXiv 1107.3516.
  

\bibitem{CR} 
J. E. Cremona, D. Rusin, Efficient solution of rational conics, {\em 
Math. Comp.} 
72(2003), 1417-1441. 

\bibitem{Eb1} W. ~M. Eberly, {\em Computations for algebras and group
representations}, Ph. D. Thesis, Dept. of Computer Science, 
University of Toronto,
1989. 

\bibitem{Eb2} W. ~M. Eberly, Decomposition of algebras over finite fields
and number fields;
{\it Computational Complexity}, 1(1991), 179-206. 


\bibitem{Eb3} W. ~M. Eberly, Decompositions of algebras over ${\R}$ and
${\mathbb C}$;
{\it Computational Complexity}, 1(1991), 207-230. 

\bibitem{EG1}
W. Eberly, M. Giesbrecht, Efficient decomposition of associative
algebras, {\em Proceedings, 1996 International Symposium on Symbolic and 
Algebraic Computation}, Zurich, Switzerland, pp. 170-178.

\bibitem{EG2}
W. Eberly, M. Giesbrecht, Efficient decomposition of associative
algebras over finite fields, {\em  J. Symbolic. Comput.} 29(2000), 441-488. 


\bibitem{EG3}
W. Eberly, M. Giesbrecht,
Efficient decomposition of separable algebras. 
{\em J. Symbolic Comput.} 37(2004),  35-81.


\bibitem{AL} T. Fisher, {\em How to trivialise an algebra,}
Abstracts, Rational points on curves and higher dimensional varieties:
theory and explicit methods, Workshop at Jacobs University, Bremen, July
2007. \\
http://www.faculty.jacobs-university.de/mstoll/workshop2007/fisher2.pdf





\bibitem{FR}  K. Friedl, L. R\'onyai, Polynomial time  solutions of some
      problems in computational algebra; {\em Proc. 17th ACM STOC},
      Providence, Rhode Island, 1985, 153-162.

\bibitem{GI}
W. A. de Graaf, G. Ivanyos, Finding maximal tori and splitting elements in 
matrix algebras, In: F. van Oystaeyen, M. Saorin (eds.), {\em Interactions 
Between Ring Theory and Representations of Algebras}, 
(Proc. Euroconference in Murcia, 1998), Lecture
Notes in Pure and Applied Mathematics 210, Marcel Dekker 2000, 95-105.



\bibitem{GHPS} W. A. de Graaf, M. Harrison, J. P\'{i}lnikov\'a, J. Schicho, 
A Lie algebra method for rational parametrization of Severi-Brauer surfaces,  
{\em Journal of Algebra}, 303(2006), 514--529.

%\bibitem{HJ}
% R. Horn, C. Johnson, {\em Matrix analysis}, Cambridge
%University Press, 1985.

\bibitem{HLRV}
C. Hollanti, J. Lahtnonen, K. Ranto, R. Vehkalahti, On densest MIMO lattices
from cyclic division algebras,  {\em IEEE Trans. Inform. Theory},  55(2009), 
3751-3780.

\bibitem{Iv1}
G. Ivanyos,
Finding the radical of matrix algebras using Fitting decompositions,
{\em Journal of Pure and Applied Algebra}, 139(1999), 159-182.

\bibitem{Iv2}
G. Ivanyos, Fast randomized algorithms for the structure of matrix algebras
over finite fields (extended abstract), {\em Proc. 2000 Int. Symp. Symbolic
Algebraic Computation (St. Andrews)}, (ACM, New York, 2000), pp. 175-183 


\bibitem{IKRS}
G. Ivanyos, M. Karpinski, L. R\'onyai, N. Saxena,
Trading GRH for algebra: algorithms for factoring polynomials and related 
structures, {\em Mathematics of Computation}, to appear.


\bibitem{IR} G. Ivanyos, L. R\'onyai, Finding maximal orders in semisimple
algebras over $\Q$, {\em Comput. complexity}, 3(1993), 245--261.

\bibitem{ISz} G. Ivanyos, \'A.  Sz\'ant\'o, Lattice basis reduction for 
indefinite forms and an application, {\em Discrete Math.} 
153(1996), 177--188. 

\bibitem{Jac} N. Jacobson, {\em Lectures in abstract algebra III, Theory of
fields and Galois theory}, Springer-Verlag, 1964.

\bibitem{Jan} G.J. Janusz, {\em Algebraic number fields}, American Math.
Soc., 1996.

\bibitem{KLL}
R. Kannan, A. K. Lenstra and L. Lov\'asz, 
Polynomial factorization and nonrandomness of bits of algebraic and some
transcendental numbers, {\em Mathematics of Computation}, 50(1988), 235--250.

\bibitem{LLL} A. K. Lenstra, H. W. Lenstra, L. Lov\'asz, Factoring
polynomials with rational coefficients, {\em Math. Ann.} 261(1982), 515--534. 

\bibitem{Le} H. W. Lenstra, Jr., Integer programming with a fixed number of
variables, {\em Mathematics of Operations Research,} { 8}(1983),
538--548.


\bibitem{Ma}
http://magma.maths.usyd.edu.au/magma/handbook/text/840


\bibitem{Ma1} J. Matou\v{s}ek, {\em Lectures on discrete geometry}, GTM 212,
Springer-Verlag, 2002. 

\bibitem{NS} G. Nebe, A. Steel, Recognition of division algebras, 
{\em Journal of Algebra}, 322(2009), 903--909.

\bibitem{Pa1} V. Pan, Univariate polynomials: nearly optimal algorithms for 
numerical factorization and root finding, {\em Journal of Symbolic
Computation}, 33(2002), 701--733.

\bibitem{Pi} R.~S. Pierce, {\em Associative algebras}, {Springer-Verlag,
1982.}  

\bibitem{Pi1} J. P\'{i}lnikov\'a, Trivializing a central simple algebra of 
degree 4 over the rational numbers, {\em J. Symbolic Comput.} 42(2007), 
579--586.

\bibitem{Re} I. Reiner, {\em Maximal orders}, {\it Academic Press, 1975. }


\bibitem{Ro1} L. R\'onyai, Zero divisors in quaternion algebras,
{\em Journal of Algorithms}, 9(1988) 494--506.


\bibitem{Ro2} L. R\'onyai, Computing the structure of finite algebras,
{\em Journal of Symbolic Computation}, 9(1990) 355--373. 


\bibitem{Ro3} L. R\'onyai, Algorithmic properties of maximal orders 
in simple algebras over $\Q$, {\em Comput. Complexity}, 2(1992), 225--243.


\bibitem{Ro4} L. R\'onyai, A deterministic method for computing splitting
elements in semisimple algebras over $\Q$,
{\em Journal of Algorithms}, 6(1994), 24--32.

\bibitem{Sch1} A. Sch\"onhage, The fundamental theorem of algebra in terms
of computational complexity, {\em Preliminary report}, Universit\"at T\"ubingen, 
1982.


\bibitem{Se1}
B. A. Sethuraman, Division algebras and wireless communication,
{\em Notices Amer. Math. Soc.} 57(2010), 1432--1439. 


\bibitem{Si1}
D. Simon, Solving quadratic equations using reduced unimodular quadratic
equations, {\em Math. Comp.} 74(2005), 1531--1543.



\bibitem{Vo1}
J. Voight,
Identifying the matrix ring: algorithms for quaternion algebras and
quadratic forms, manuscript, 2010.

\end{thebibliography}
\end{document}